%% file: main.tex
\documentclass[final,1p,times]{elsarticle}
\usepackage{amssymb}
\usepackage{amsmath}
\usepackage{amsthm}
\usepackage{amsfonts}
\usepackage{graphicx}
\usepackage{booktabs}
\usepackage{array}
\usepackage{multirow}
\usepackage{longtable}
\usepackage{natbib}
\usepackage{url}
\usepackage{algorithm}
\usepackage{algpseudocode}
\usepackage{hyperref}
\newtheorem{theorem}{Theorem}
\newtheorem{lemma}{Lemma}
\newtheorem{proposition}{Proposition}
\newtheorem{corollary}{Corollary}
\newtheorem{assumption}{Assumption}
\newtheorem{definition}{Definition}
\theoremstyle{remark}
\newtheorem{remark}{Remark}
\journal{Computers \& Industrial Engineering}
\begin{document}
\begin{frontmatter}
\title{Coupling-Aware Sales and Operations Planning with Forced Co-Production in Polymer Production Plants}

\affiliation[kfupm]{organization={Industrial and Systems Engineering (ISE) Department and IRC for Smart Mobility and Logistics (IRC-SML), \\King Fahd University of Petroleum and Minerals (KFUPM)},
            city={Dhahran},
            country={Saudi Arabia}}
\affiliation[its1]{organization={Industrial and Systems Engineering Department, Sepuluh Nopember Institute of Technology},
            city={Surabaya},
            country={Indonesia}}
\affiliation[its2]{organization={Business Statistics Department, Sepuluh Nopember Institute of Technology},
            city={Surabaya},
            country={Indonesia}}
\author[kfupm]{Mansur M. Arief}
\author[its1]{Yan Akhra Pratama}
\author[its1]{Bilal Ahmadi}
\author[its2]{Apsarini Pradipta}
\author[its1]{Iwan Vanany}



\begin{abstract}
Profitability metrics, such as the single-product gross profit margin, constitute the standard toolbox to prioritize products in sales and operations planning (S\&OP). These metrics leverage the premise that production lines can be committed independently, which endows them with a simple per-ton ranking that a planner can compute and use easily in practice. However, coupled plants, especially polymer plants whose parallel lines draw simultaneously on a single bulk feed, can fundamentally undermine this premise and mislead to value-destroying production plans. We propose a framework built on what we call the Augmented Gross Profit for Product Clusters (AGPPC), which converts the per-ton ranking metric into a per-coupled-hour ranking that carries a per-instance optimality certificate. We present the theory of AGPPC, which combines the co-production column concept with a fluid relaxation of the planning problem, alongside an integrated bilinear mixed integer program of the plant and a McCormick-linearized baseline that provides certified bounds for it, and we demonstrate its effectiveness on real operational data from an Indonesian polymer producer, where the coupling-aware metric alone raises operating profit by 7.6\% and the full optimization by 28\% over current practice.
\end{abstract}
\begin{keyword}
Mixed Integer Programming \sep Sales and Operations Planning \sep Make-to-Stock \sep Make-to-Order \sep Polymer Manufacturing \sep McCormick Relaxation \sep Greedy Heuristics
\end{keyword}
\end{frontmatter}

\section{Introduction}
\label{sec1}
Sales and operations planning (S\&OP) has constituted a powerful toolbox to reconcile commercial commitments with plant capability in process industries \citep{gnanendran2026practical, maravelias2021chemical}. From a planning standpoint, the difficulty is that capacity is scarce relative to the product portfolio, so that some criterion must rank products before any schedule can be built. Economically, this difficulty manifests itself as the need for a per-unit valuation of the bottleneck, and prioritization rules, cast under the umbrella of so-called profitability metrics, have been substantially used over the years to supply that valuation. These rules range from the gross profit margin per ton to contribution per machine hour \citep{pochet2006production, harjunkoski2014scope}. In many settings they can be shown to reproduce the optimal ordering exactly, thus forming a workhorse for planning problems that would otherwise be combinatorial.

While powerful, it is also well known that, to endow a prioritization rule with the property that ranking by it is optimal, one often requires knowledge and careful analysis of the underlying plant structure \citep{shah2005process, silva2023optimization}. This is particularly so in process industries, because a criterion that gives substantial simplification, such as the margin ranking, can also greatly harm performance if it is not configured to suit the considered plant. The high-level issue is that ranking products by their own economics alone does not necessarily produce a better plan than an arbitrary order. To be economically valid, the object being ranked must coincide with what the plant can actually commit to which, in turn, may be a bundle of products rather than one. On the other hand, general-purpose methods, such as solving the full mixed integer program (MIP), are advantageously exact and require no structural argument, but with the price that they are opaque at the planning desk and are not always available there.

Our goal in this paper is to study an approach that gives economic guarantees for product prioritization in plants whose lines are coupled. By ``coupled'' here we mean that parallel lines draw simultaneously on a shared bulk feed under grade compatibility restrictions, so that committing the primary line to a grade forces co-production on the remaining lines at rates whose total is fixed by the feed. On the other hand, the commercial and technical data of the plant (prices, bills of material, rate envelopes, compatibility matrices, demand forecasts) are known and can be engineered, so that we can evaluate candidate commitments and rank them. As will be revealed in the later sections, we would also need to place certain conditions on the demand state to elicit our guarantees. Thus, our terminology of ``coupled'' is not meant to describe an intractable plant, but instead meant to highlight the relaxation of the traditional requirement of line independence in using a more coupling-aware prioritization criterion.

To this end, while one may attempt to use the criterion already in place, we show that, when applied in such coupled settings, the margin ranking can destroy value systematically if used arbitrarily in coupled plants. We highlight that this is an issue that goes beyond theoretical interest. By not bearing an economic guarantee, the loss is hidden from the planner until it is too late, hus potentially trapping the planner into making a large mistake without knowing it. Our approach, in some sense, aims to convert this familiar criterion that lacks guarantees and risks undetected early loss into one that carries an optimality certificate under some more general coupled-plant settings.

Our motivation comes from a petrochemical-derived polymer facility in Indonesia whose three continuous lines share one bulk feed stream. Process $k{=}1$ (P1) is the primary pelletizing line, $k{=}2$ (P2) a secondary pelletizing line, and $k{=}3$ (P3) a granulation line. Feed arrives under a take-or-pay contract at an essentially constant rate, and on-site storage for it is tight, so the plant must convert roughly the contracted tonnage every month whether or not orders exist. Compatibility is grade-specific: running grade $i$ on P1 admits only a small set of grades on P2 and on P3, and the instantaneous rates of the three lines must sum to the feed rate. Figure~\ref{fig:interprocess_compat} gives a pictorial overview of the interaction.

\begin{figure}[htbp]
    \centering
    \includegraphics[width=1.0\linewidth]{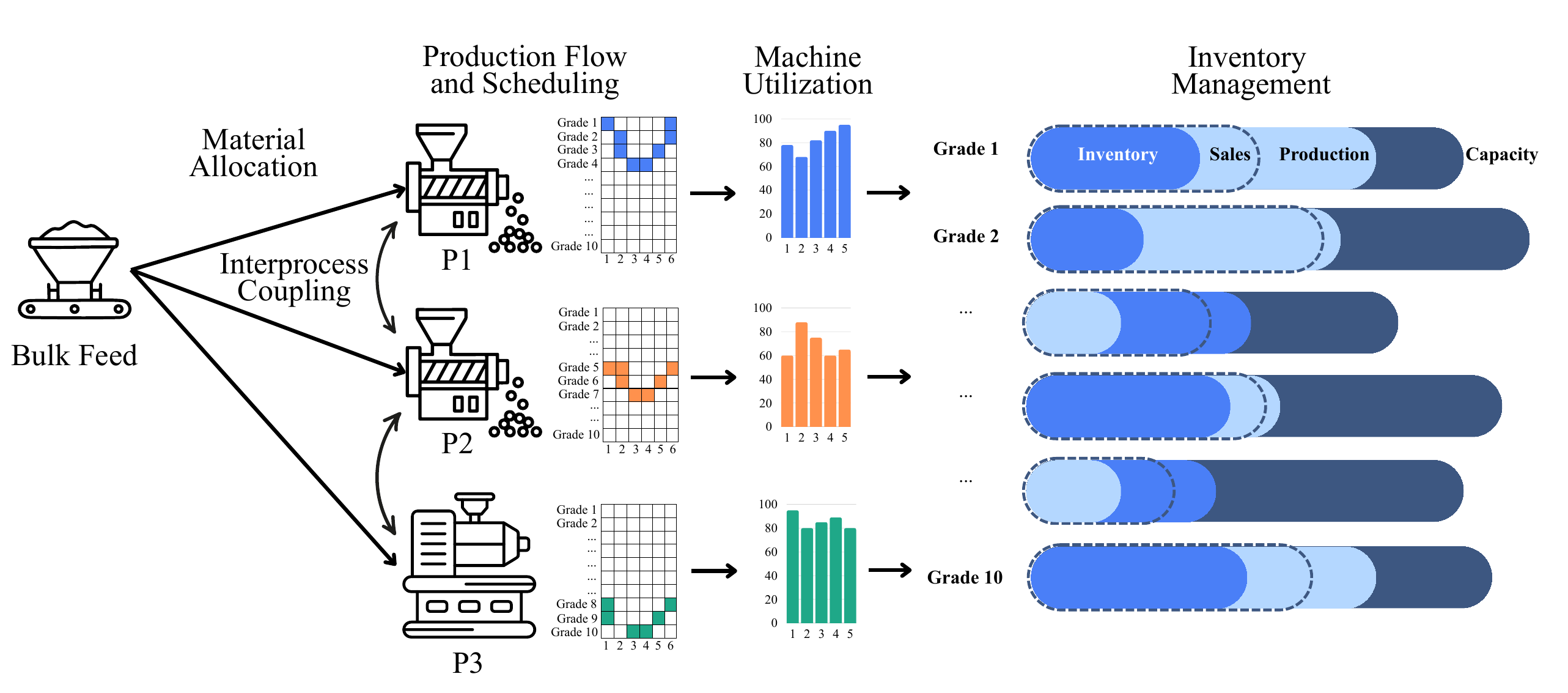}
    \caption{Overview of the coupled planning problem. A take-or-pay bulk feed stream ($\rho_t \approx 30.5$ tph) is allocated simultaneously to the primary pelletizing line P1, the secondary pelletizing line P2, and the granulation line P3; the arrows between the lines mark the interprocess coupling, i.e., the anchor grade selected on P1 restricts P2 to the compatibility set $\mathcal{C}_2(i)$ and P3 to $\mathcal{C}_3(i)$, and the three instantaneous rates must sum to the feed rate. Each line then carries a grade-by-period production plan, an operating-hours profile against its monthly capacity, and a per-grade balance of opening inventory, sales, production, and stock capacity.}
    \label{fig:interprocess_compat}
\end{figure}

Two characteristics of the plant system shape the entire planning problem. First, co-production is \emph{forced}, i.e., because the rate envelopes of P1 and P2 together cannot absorb the feed, the granulation line must run whenever the primary line runs, whether or not granule demand exists. Second, the \emph{economics} of grades are coupled: the value of committing the plant to a grade depends on the margins and remaining demands of all forced co-products, not on the margin of that grade alone. A grade with an attractive own margin can anchor a bundle whose forced co-products destroy value, while a modest grade can anchor a highly profitable one. Siloed coordination between the commercial and the operations functions is well documented to yield conflicting objectives and suboptimal outcomes \citep{oliva2011cross, das2017planning, hulthen2017challenges}, and in process industries the production constraints are tight enough \citep{zhang2007effective, kadambur2015multi} that sales planning has little room to move without producing infeasible plans. Planning practice at the facility, implemented in its planning workbook and, in our experience, common across the industry, nonetheless ranks grades by price minus direct cost per ton and fills the monthly calendar from the top of that ranking. In this paper we formalize why that is misleading particularly in a coupled-plant system, quantify it on real data, and propose a corrected metric with provable properties.

In this work, we aim to study an approach that obtains economically defensible plans for coupled plants by endowing a versatile ranking rule with guarantees. On a high level, our approach comprises a two-layer framework, where the first layer prices one hour of coupled plant operation, while the second layer allocates the scarce hour budget according to that price. To endow guarantees in this ``price-then-allocate'' approach, our framework utilizes three key ingredients:
\begin{enumerate}
\item \emph{Co-production columns}: we read off the compatibility and feed-tracking constraints the smallest unit of operation the plant actually permits, namely a compatible triple of grades together with a feasible rate mix, and we make that unit, rather than the individual grade, the object being ranked.
\item \emph{Demand-aware effective margins}: we discount the contribution of any forced co-product whose remaining demand is already exhausted to its negative direct cost, so that the value of a commitment reflects the current demand state rather than a static price sheet.
\item \emph{Certified allocation}: we allocate the coupled-hour budget greedily by the resulting metric, and we bound the loss of that allocation against a fluid relaxation of the planning problem that keeps precisely the coupling constraints and drops the rest.
\end{enumerate}
We call the metric that the first two ingredients produce the \emph{Augmented Gross Profit for Product Clusters} (AGPPC). The relaxation underlying the third ingredient is a linear program, so its optimum is computable on any instance, which is what builds our certificate. Our approach is robust in the sense that it recovers the optimum of the relaxation exactly when demands are rich relative to the coupled-hour budget, while it still returns a valid and computable bound on its own suboptimality when they are not. Around this framework we build an integrated bilinear mixed integer program of the plant, which prices the side constraints and the multi-period stock decisions that the metric deliberately ignores, and a McCormick-linearized baseline that bounds the exact model from above and, with a repair step, from below. To our best knowledge, a coupling-aware profitability metric with this type of guarantee is the first of its kind in the S\&OP literature.

In the following, Section~\ref{sec:lit} positions our work in the literature. Section~\ref{sec:model} describes the plant and presents the integrated formulation, closing with the limitation of margin-based prioritization that motivates everything after it. Section~\ref{sec:agppc} presents in detail our AGPPC framework, its underpinning mechanism, and its guarantees. Section~\ref{sec:method} describes the solution methodology, including the linearized baseline and the strategy variants. Then, Section~\ref{sec:experiments} demonstrates and compares our framework with existing practice in several experiments on real industrial data, including the deployed planning tool. Section~\ref{sec:conclusion} discusses limitations and future work.

\section{Related Work and Positioning}
\label{sec:lit}
\subsection{Planning and Compatibility in Process Industries}
Classic lot-sizing models \citep{pochet2006production} and parallel-line lot-sizing formulations with sequence-dependent setups \citep{alimian2022parallel} laid the foundation for MIP-based decision support in continuous and semi-continuous systems, and have been extended to parallel machines \citep{gupta2006flowshop, meyr2013decomposition}, campaign planning \citep{grunow2002campaign}, and rolling-horizon policies \citep{alimian2022parallel}. In petrochemicals specifically, \citet{alfares2002optimization} maximize profit through process selection and capacity allocation, \citet{mrad2016optimum} extend multi-grade planning to multiple periods, plants, and suppliers, and \citet{sidki2025monolithic} co-optimize production with pipeline distribution at industrial scale. Most of these formulations, however, assume that any product can be made on any line, which rarely holds in polymer processing because of contamination and grade compatibility limits \citep{elyasi2024imperialist, du2024plant}.

Compatibility itself has been modeled through binary feasibility matrices, directed graphs, and transition cost functions. Food and pharmaceutical applications handle product-to-line compatibility through assignment matrices or sequence-dependent setups \citep{akkerman2010food, mendez2006state}, with infeasible transitions either penalized \citep{pinedo2016scheduling} or prohibited outright by logic constraints \citep{raman1994modelling}. Polymer-specific work concentrates on transitions and sequencing: \citet{shi2016optimization} optimize grade-transition trajectories to minimize off-specification production, and \citet{elyasi2024imperialist} schedule unrelated parallel machines under machine-job compatibility. Process-design formulations do optimize equipment and material compatibility jointly \citep{pedrozo2021optimal}, and integrated production-distribution models couple planning with downstream flow \citep{lohmer2021production}, but only a handful of papers integrate grade compatibility into high-level planning \citep{younes2013quality, ghaleb2023dynamic}, and these are tailored to a single line configuration. What is consistently absent is the coupling of compatibility with \emph{upstream bulk allocation}, which is precisely the mechanism that makes co-production forced in our setting.

\subsection{Strategy, Bilinear Structure, and Positioning}
Hybrid make-to-order and make-to-stock positioning is well studied for decoupled discrete production \citep{olhager2003strategic, soman2004combined, rafiei2011order, peeters2020hybrid}, but not for continuous coupled lines where the strategy space itself is restricted by compatibility. On the computational side, the rate times hour equals quantity relationship that arises whenever both the operating rate and the operating duration are decisions renders planning models bilinear and hence nonconvex. The standard convexification is the McCormick envelope \citep{mccormick1976computability, alkhayyal1983jointly}, whose tightness depends on the bounds of the participating variables and which underpins global optimization of bilinear process networks \citep{quesada1995global}; piecewise linearization is a common alternative at the cost of additional binaries \citep{diabat2015location}. Modern global solvers handle such constraints directly through spatial branch-and-bound \citep{sahinidis1996baron, gurobi2024, bolusani2024scip}. We exploit both routes: the exact bilinear model solved by spatial branch-and-bound, and a bound-based linearization used as a fast baseline and bound provider.

Table~\ref{tab:literature_summary} summarizes representative prior work. We identify three gaps. First, an \emph{integration gap}: compatibility and bulk allocation are treated separately, so their interaction, which drives coupled co-production, is never fully captured analyzed systematically. Second, a \emph{profitability-metric gap}: planning approaches and industrial practice alike evaluate products by single-product margins even when compatibility couples the economics of entire bundles, and no prior work supplies a corrected metric with guarantees. Third, a \emph{strategy gap}: hybrid positioning has not been studied for coupled continuous lines. This work addresses all three.

\begin{table}[htbp]
\centering
\caption{Prior studies on sales and operations planning with compatibility modeling.}
\label{tab:literature_summary}
\resizebox{\textwidth}{!}{%
\scriptsize
\begin{tabular}{@{}p{2.7cm}p{2.5cm}p{2.5cm}p{1.7cm}p{1.3cm}@{}}
\toprule
\textbf{Study} & \textbf{Domain} & \textbf{Method} & \textbf{Compatibility} & \textbf{Bulk Feed} \\
\midrule
\citet{pochet2006production} & General lot-sizing & MIP & No & Yes\\
\citet{alimian2022parallel} & Multi-line manufacturing& MIP + rolling horizon & Limited (setups) & No\\
\citet{akkerman2010food} & Food production & Heuristics/MIP & Yes & No \\
\citet{mendez2006state} & Pharma Production & Scheduling models & Yes & No\\
\citet{shi2016optimization} & Polyethylene grade transitions & Dynamic optimization (NLP) & Transitions only & No \\
\citet{elyasi2024imperialist} & Multi-line scheduling & Metaheuristic & Yes & No\\
\citet{ghaleb2023dynamic} & Thermoplastics & Simulation + dispatching rules & Limited & No \\
\citet{mrad2016optimum} & Petrochemicals (multi-grade) & Multi-period MILP & Limited (transition waste) & Yes \\
\citet{sidki2025monolithic} & Refinery logistics & Monolithic MILP & Limited & Yes \\
\citet{peeters2020hybrid} & Discrete manufacturing & Queueing/heuristics & No & No \\
\textbf{This work} & Polymer Plant & Bilinear MIP + provable heuristic & \textbf{Yes} & \textbf{Yes}\\
\bottomrule
\end{tabular}
}
\end{table}

\section{The Coupled Planning Problem and Its Model}
\label{sec:model}
We first describe the plant and fix notation (Sections~\ref{sec:problem}-\ref{sec:notation}). We then present the objective and the constraints, ordered so that the interprocess coupling, which gives the problem its character, appears last and can be read against everything it interacts with (Sections~\ref{sec:model_overview}-\ref{sec:compat}). With these in place, we describe, as the key message of this section, the peril of applying the prevailing profitability metric to such a plant (Section~\ref{sec:whymislead}).

\subsection{Problem Description}
\label{sec:problem}
We consider a plant that converts a bulk raw material into $|\mathcal{I}|$ finished grades on $|\mathcal{K}|$ parallel continuous processes over a horizon of $|\mathcal{T}|$ monthly periods. The processes draw simultaneously from a common feed stream delivered at an essentially constant rate $\rho_t$ (tons per hour). This means whenever P1 runs a grade, P2 and P3 must run \emph{compatible} grades such that the combined instantaneous consumption of the three lines tracks $\rho_t$. Running grade $i$ on P1 admits only the grades in a set $\mathcal{C}_2(i)$ on P2 and $\mathcal{C}_3(i)$ on P3.

Each period, the planner decides which grades to sell and in what quantities (bounded by demand forecasts), which grades to produce on which process, at what rate and for how many hours, which materials to purchase, and how much product and material inventory to carry. The feed is delivered under a take-or-pay contract ($S_t$ tons per month) with limited on-site storage, which effectively forces the plant to convert most of the delivered tonnage within the month. The objective is to maximize profit: sales revenue minus procurement, electricity, and bagging costs, with unmet demand penalized.

\subsection{Notation}
\label{sec:notation}
Tables~\ref{tab:params}-\ref{tab:vars} summarize the notation. Sets are $\mathcal{I} = \{1,\dots,I\}$ of product grades with index $i$, $\mathcal{J} = \{1,\dots,J\}$ of materials and utilities with index $j$ (with $\mathcal{J}^{RM} \subset \mathcal{J}$ the raw materials), $\mathcal{K} = \{1,\dots,K\}$ of processes with index $k$, and $\mathcal{T} = \{1,\dots,T\}$ of monthly periods with index $t$.

\begin{table}[htbp]
\centering\caption{Parameters.}\label{tab:params}
\small
\begin{tabular}{@{}ll@{}}
\toprule
Symbol & Description \\
\midrule
$d_{it}$, $p_{it}$ & demand forecast and unit price of grade $i$ in period $t$ (\$/ton) \\
$c_j$ & unit cost of material $j$; $c^E$ electricity price (\$/kWh) \\
$a_{ijk}$ & bill-of-material: units of $j$ per ton of grade $i$ on process $k$ \\
$\underline{r}_{ik}, \bar{r}_{ik}$ & minimum / maximum production rate of grade $i$ on process $k$ (tph) \\
$\rho_t$ & bulk feed rate in period $t$ (tph); $S_t$ contracted bulk supply (tons) \\
$\bar{H}_k$ & maximum operating hours of process $k$ per period \\
$e_k$ & power consumption of process $k$ (kWh/h); $b_k$ bagging cost (\$/ton) \\
$\Phi^{12}_{ii'}, \Phi^{13}_{ii'} \in \{0,1\}$ & P1$\to$P2 / P1$\to$P3 compatibility indicators \\
$\delta^{\min}_{ii'}$ & minimum transition quantity from grade $i$ to $i'$ (tons) \\
$\phi_{ii'}, \alpha_{ii'} \in \{0,1\}$ & production-flow adjacency and grade-aggregation indicators \\
$\underline{q}_i$, $\bar{q}_{ik}$ & minimum production quantity of $i$; maximum quantity of $i$ on process $k$ (tons) \\
$\sigma^0_i, \iota^0_j$ & initial product stock / material inventory \\
$\underline{\sigma}_i, \bar{\sigma}_i$; $\underline{\iota}_j, \bar{\iota}_j$ & product stock and material inventory bounds \\
$M$, $\epsilon$ & big-M constant and numerical tolerance \\
\bottomrule
\end{tabular}
\end{table}

\begin{table}[htbp]
\centering\caption{Decision variables.}\label{tab:vars}
\small
\begin{tabular}{@{}ll@{}}
\toprule
Symbol & Description \\
\midrule
$x_{ikt} \geq 0$ & production quantity of grade $i$ on process $k$ in period $t$ (tons) \\
$y_{ikt} \in \{0,1\}$ & 1 if grade $i$ runs on process $k$ in period $t$ \\
$r_{ikt} \geq 0$, $h_{ikt} \geq 0$ & production rate (tph) and operating hours \\
$y^{tr}_{ii't} \in \{0,1\}$ & 1 if a transition from grade $i$ to $i'$ occurs in period $t$ \\
$z^{out}_{it}, z^{in}_{it} \geq 0$ & sales and production inflow of grade $i$ in period $t$ \\
$\sigma^{s}_{it}, \sigma^{e}_{it} \geq 0$ & starting and ending stock of grade $i$ \\
$u_{jt} \geq 0$, $v_{jt} \in \{0,1\}$ & purchase quantity and purchase indicator of material $j$ \\
$w_{jkt} \geq 0$ & usage of material $j$ by process $k$ \\
$\iota^{s}_{jt}, \iota^{e}_{jt} \geq 0$ & starting and ending inventory of material $j$ \\
\bottomrule
\end{tabular}
\end{table}

\subsection{Objective Function}
\label{sec:model_overview}
The formulation follows the physical and commercial flow of the plant. Feed and additives are purchased ($u_{jt}$), held ($\iota_{jt}$), and consumed by the lines ($w_{jkt}$) in proportion to what each line produces. Each line runs a grade ($y_{ikt}$) at a rate ($r_{ikt}$) for a number of hours ($h_{ikt}$), and the product of the two is the tonnage produced ($x_{ikt}$), which flows into stock ($\sigma_{it}$) from which sales ($z^{out}_{it}$) are served. The objective maximizes revenue minus procurement, electricity, and bagging costs, minus a per-ton penalty on unmet demand:
\begin{align}
\max \;\; & \sum_{i,t} p_{it} z^{out}_{it}
- \sum_{j,t} c_j u_{jt}
- \sum_{i,k,t} c^E e_k h_{ikt}
- \sum_{i,k,t} b_k x_{ikt}
- \lambda \sum_{i,t} \left( d_{it} - z^{out}_{it} \right),
\label{eq:objective}
\end{align}
with penalty weight $\lambda$. Materials are expensed at purchase, which is a cash view consistent with the take-or-pay contract; electricity is charged per operating hour and bagging per ton produced. In reporting we also track the \emph{usage-based operating profit}
\begin{equation}
    \Pi = \sum_{i,t} p_{it} z^{out}_{it} - \sum_{j,k,t} c_j w_{jkt} - \sum_{i,k,t} c^E e_k h_{ikt} - \sum_{i,k,t} b_k x_{ikt},
    \label{eq:profit}
\end{equation}
which expenses materials as they are consumed. Two reporting conventions therefore coexist. The strategy ranking is identical under \eqref{eq:objective} and under $\Pi$ (Table~\ref{tab:strategy}); the two differ in level only because \eqref{eq:objective} expenses the full take-or-pay purchase in the month of delivery while $\Pi$ expenses feed as it is converted. We report $\Pi$ throughout, because it is the only accounting on which the optimizer and the solver-free heuristics of Section~\ref{sec:agppc} can be scored identically. The weight $\lambda$ in \$/ton is a tie-breaking device to ensure planners can incorporate cost-service level tradeoff deliberately. In the experiments, we sweep it from $0$ to $100$ \$/ton to see how it affects the objective and the resulting strategy (Section~\ref{sec:exp_robust}).

\subsection{Sales, Stock, and Mass Balance Constraints}
This first group of constraints conserve the two stocks the plant carries, finished grades and the materials that make them, with the bulk feed as the one purchase the model does not choose. We thus have
\begin{align}
& z^{out}_{it} \leq d_{it}, && \forall i,t \label{eq:demand}\\
& z^{out}_{it} + \sigma^{e}_{it} = \sigma^{s}_{it} + z^{in}_{it}, && \forall i,t \label{eq:gradebal}\\
& \sigma^{s}_{it} = \begin{cases}\sigma^0_i & t=1\\ \sigma^{e}_{i,t-1} & t>1\end{cases}, && \forall i,t \label{eq:stocklink}\\
& \underline{\sigma}_i \leq \sigma^{e}_{it} \leq \bar{\sigma}_i, && \forall i,t \label{eq:stockbnd}\\
& z^{in}_{it} = \sum_{k} x_{ikt}, && \forall i,t \label{eq:inflow}\\
& w_{jkt} = \sum_{i} a_{ijk}\, x_{ikt}, && \forall j,k,t \label{eq:usage}\\
& \iota^{s}_{jt} + u_{jt} = \sum_{k} w_{jkt} + \iota^{e}_{jt}, && \forall j,t \label{eq:matbal}\\
& \sum_{k} w_{jkt} \le \iota^{s}_{jt} + u_{jt}, && \forall j,t \label{eq:matavail}\\
& \iota^{s}_{jt} = \begin{cases}\iota^0_j & t=1\\ \iota^{e}_{j,t-1} & t>1\end{cases}, && \forall j,t \label{eq:invlink}\\
& \underline{\iota}_j \leq \iota^{e}_{jt} \leq \bar{\iota}_j, && \forall j,t \label{eq:invbnd}\\
& u_{j^\ast t} = S_t, && \forall t \label{eq:bulk}\\
& u_{jt} \le M v_{jt}, && \forall j,t \label{eq:purchind}
\end{align}
Constraint~\eqref{eq:demand} caps sales at the forecast, and~\eqref{eq:gradebal} balances each grade over the period, splitting opening stock plus production inflow between sales and closing stock. Constraints~\eqref{eq:stocklink}-\eqref{eq:stockbnd} carry stock across periods and hold it within its warehouse bounds. Constraint~\eqref{eq:inflow} defines the inflow as the tonnage produced across all lines, and~\eqref{eq:usage} converts that tonnage into material draw through the bill of materials, which is indexed by process because the same grade consumes differently on different lines. Constraints~\eqref{eq:matbal}-\eqref{eq:matavail} do for materials what~\eqref{eq:gradebal} does for grades, the second stating explicitly that consumption cannot exceed what is on hand. Constraints~\eqref{eq:invlink}-\eqref{eq:invbnd} are the material counterparts of~\eqref{eq:stocklink}-\eqref{eq:stockbnd}. The feed is the exception, in that~\eqref{eq:bulk} fixes the purchase quantity of the bulk material $j^\ast$ at the contracted tonnage instead of leaving it to the model, while~\eqref{eq:purchind} switches on the purchase indicator of any material actually bought.

Note here that Constraint~\eqref{eq:bulk} encodes the take-or-pay contract; combined with the tight storage bound $\bar\iota_{j^\ast}$ in~\eqref{eq:invbnd} it forces in-month conversion. This pair of constraints is what turns the problem from a demand-chasing exercise into a capacity-allocation problem, i.e., the plant must convert roughly $S_t$ tons every month regardless of orders, so the decision should consider \emph{which} grades absorb that conversion.

\subsection{Capacity, Rate, Logic, and Transition Constraints}
This second group governs what each line can physically do inside a period, namely how many hours it has, how fast it may run, and which grades it is obliged to open. We have
\begin{align}
& \sum_{i} h_{ikt} \leq \bar{H}_k, && \forall k,t \label{eq:hourbudget}\\
& h_{ikt} \leq \bar{H}_k\, y_{ikt}, && \forall i,k,t \label{eq:hourlink}\\
& x_{ikt} \leq M y_{ikt}, && \forall i,k,t \label{eq:xlink}\\
& x_{ikt} \leq \bar{q}_{ik}, && \forall i,k,t \label{eq:xcap}\\
& \underline{r}_{ik}\, y_{ikt} \leq r_{ikt} \leq \bar{r}_{ik}\, y_{ikt}, && \forall i,k,t \label{eq:ratebnd}\\
& x_{i1t} \geq \underline{q}_i\, y_{i1t}, && \forall i,t \label{eq:minlot}\\
& \sum_{k} x_{ikt} + \sum_{i': \delta^{\min}_{ii'}>0 \,\vee\, \alpha_{ii'}>0}\; \sum_k x_{i'kt} \geq \underline{q}_i, && \forall i,t: \underline{q}_i > 0 \label{eq:minprod}
\end{align}
Constraint~\eqref{eq:hourbudget} caps the hours a line can operate in a period, and~\eqref{eq:hourlink} releases those hours to a grade only once it is switched on. Constraints~\eqref{eq:xlink}-\eqref{eq:xcap} do the same for tonnage, the first tying production to the on-off decision and the second capping it at the ceiling of the grade-line pair. Constraint~\eqref{eq:ratebnd} holds the rate inside the equipment envelope of that pair and drives it to zero when the pair is idle, while~\eqref{eq:minlot} requires any grade opened on the primary line to carry at least a technical minimum lot. Constraint~\eqref{eq:minprod} then obliges grades with a positive minimum production quantity to be produced each period, either directly or through their transition-linked or aggregated relatives.

Within a month the lines move between grades, and some transitions are admissible only after a minimum quantity of the successor has been produced, to flush the equipment. Write $\mathcal{S}(i) = \{i' : \delta^{\min}_{ii'} > 0\}$ for the set of successors of grade $i$ that carry such a requirement, and let $\mathcal{I}^{tr} = \{i : \mathcal{S}(i) \neq \emptyset\}$ be the grades that have one. We thus further have
\begin{align}
& \sum_{i' \in \mathcal{S}(i)} y^{tr}_{ii't} \;\geq\; y_{ikt}, && \forall i \in \mathcal{I}^{tr}, k, t \label{eq:trsel}\\
& \sum_{i' \in \mathcal{S}(i)} y^{tr}_{ii't} \;\leq\; \sum_{k} y_{ikt}, && \forall i \in \mathcal{I}^{tr}, t \label{eq:tract}\\
& \sum_{i} y^{tr}_{ii't}\, \delta^{\min}_{ii'} \;\leq\; \sum_{k} x_{i'kt}, && \forall i',t \label{eq:ytrans3}
\end{align}
Constraint~\eqref{eq:trsel} requires a grade that runs on any line to select at least one of its admissible successors, and~\eqref{eq:tract} permits a transition out of a grade only in periods in which that grade runs somewhere. Together they switch the transition indicators on exactly with the grades that drive them. Constraint~\eqref{eq:ytrans3} then enforces the minimum transition quantity on the successor's tonnage.

Finally, the central relationship ties quantity, rate, and hours,
\begin{align}
& x_{ikt} = r_{ikt}\, h_{ikt}, && \forall i,k,t. \label{eq:bilinear}
\end{align}
Note here that both rate and hours are decisions, in that the plant can run a grade slowly for long or quickly for short within the envelope of~\eqref{eq:ratebnd}, so this product cannot be avoided by fixing either factor. It is the first source of nonconvexity in the model, and Section~\ref{sec:mcc_method} shows how to relax it.
\subsection{Interprocess Compatibility Constraints}
\label{sec:compat}
The constraints in this subsection are the ones that distinguish the plant from a set of independent parallel machines, and they act at three levels: which grades may be on simultaneously, for how long, and at what rates. Running grade $i$ on P1 forces its compatible co-products online, and conversely a co-product can run only when a compatible anchor runs:
\begin{align}
& \Phi^{12}_{ii'}\, y_{i1t} \leq y_{i'2t}, \qquad \Phi^{13}_{ii'}\, y_{i1t} \leq y_{i'3t}, && \forall i,i',t \label{eq:force}\\
& M y_{i1t} \geq \sum_{i'} \Phi^{12}_{ii'}\, y_{i'2t}, \qquad M y_{i1t} \geq \sum_{i'} \Phi^{13}_{ii'}\, y_{i'3t}, && \forall i,t. \label{eq:sync}
\end{align}
Operating windows on the parallel lines must cover each other, hence
\begin{align}
& h_{i1t} \leq \sum_{i' \in \mathcal{C}_2(i)} h_{i'2t}, \qquad h_{i1t} \leq \sum_{i'' \in \mathcal{C}_3(i)} h_{i''3t}, && \forall i,t \label{eq:dur1}\\
& h_{i'2t} \leq \sum_{i: i' \in \mathcal{C}_2(i)} h_{i1t}, \qquad h_{i''3t} \leq \sum_{i: i'' \in \mathcal{C}_3(i)} h_{i1t}, && \forall i',i'',t. \label{eq:dur2}
\end{align}
Finally, the combined instantaneous rates must track the feed rate whenever the anchor operates. For every anchor $i$ and every compatible pair $(i',i'') \in \mathcal{C}_2(i) \times \mathcal{C}_3(i)$,
\begin{align}
& h_{i1t} \Big( r_{i1t} + \sum_{i' \in \mathcal{C}_2(i)} r_{i'2t} + \sum_{i'' \in \mathcal{C}_3(i)} r_{i''3t} \Big) \geq \rho_t\, h_{i1t}, && \forall i,t \label{eq:syncmin}\\
& h_{i1t} \left( r_{i1t} + r_{i'2t} + r_{i''3t} \right) \leq \rho_t\, h_{i1t}, && \forall i, (i',i''), t \label{eq:syncmax}
\end{align}
together with production-flow connectivity for grades with flow neighbors. Note that Constraints \eqref{eq:syncmin}-\eqref{eq:syncmax} are bilinear, hence together with~\eqref{eq:bilinear} they make the model a nonconvex mixed-integer quadratically-constrained program (MIQCP). We have two remarks on these constraints.

\begin{remark}[Forced co-production]
\label{rem:forced}
In the industrial data, $\bar{r}_{i1} + \bar{r}_{i'2} < \rho_t$ for every compatible pair $(i,i')$, i.e., P1 and P2 together cannot absorb the feed. Constraint~\eqref{eq:syncmin} therefore forces the granulation line to co-produce, at $r_{i''3t} \geq \underline{r}_{i''3} \geq 6$ tph or one fifth of the feed, whenever the primary line runs. Co-production is thus not a choice but a structural property of the plant, and it is the root cause of the limitation of the previous approach analyzed next.
\end{remark}

\begin{remark}[Aggregate time buckets]
\label{rem:buckets}
Constraints~\eqref{eq:force}-\eqref{eq:syncmax} enforce compatibility, window coverage, and rate tracking in aggregate monthly form. They are necessary conditions for a within-month schedule in which the three lines can be synchronized, but they do not fix the sequence itself: two grades whose monthly hours overlap in~\eqref{eq:dur1}-\eqref{eq:dur2} are not thereby required to run at the same instant. Consistent with hierarchical planning practice, within-month sequencing is delegated to the daily scheduling layer, which is outside the scope of this model. All plans reported in Section~\ref{sec:experiments} satisfy the aggregate conditions.
\end{remark}

\subsection{Perils of Margin-Based Prioritization}
\label{sec:whymislead}
With the model in place, we now explain why using the prevailing profitability metric can be dangerous, in the sense of carrying no economic guarantee and, more importantly, of risking an unnoticed and systematic destruction of value. To this end, we notice that practitioners' often rank grades by the single-product margin
\begin{equation}
\mathrm{SPM}_{i}(t) \;=\; p_{it} - c^{BOM}_{i1} - b_1,
\label{eq:spm}
\end{equation}
where $c^{BOM}_{ik} \;=\; \sum_{j} a_{ijk}\, c_j$, i.e. price minus direct cost per ton on the primary line, and fills the calendar from the top of that ranking. When lines are decoupled, \eqref{eq:spm} is a sensible proxy for the shadow value of capacity. However, in coupled plants, it fails for three reasons. First, committing P1 to grade $i$ forces co-production on P2 and P3 by Remark~\ref{rem:forced}, so the relevant economic object is the \emph{cluster} $\{i\} \cup \mathcal{C}_2(i) \cup \mathcal{C}_3(i)$, not just the grade. Second, the bottleneck is a shared \emph{hour} of coupled operation, not a ton of any single grade, so a correct metric must be expressed per hour. Third, forced co-products whose demand is exhausted are produced at pure cost, so the value of an anchor depends on the current demand state, which a static margin cannot reflect.

We support our claim above with the following result, constructed within the structure of Section~\ref{sec:problem} and with the plant's own rate envelope.

\begin{proposition}[Unbounded suboptimality of the margin ranking]
\label{prop:spm_bad}
For every $\Gamma > 0$ there is an instance satisfying Assumption~\ref{ass:fluid} on which the margin-ranking heuristic attains profit $\Pi_{\mathrm{SPM}} \le \Pi^\ast - \Gamma$, and $\Pi_{\mathrm{SPM}}$ is negative for $\Gamma$ large, while AGPPC-greedy attains $\Pi^\ast$.
\end{proposition}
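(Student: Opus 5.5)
The proof is by an explicit parametric construction. We build a family of instances indexed by a scale parameter, compute the profit of the margin ranking and of the optimum in closed form, and then let the parameter grow.

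\emph{The instance.} Take one period ($T=1$) and two anchor grades $A$ and $B$ on P1. Each anchor has a singleton compatibility set on P2 and on P3:
\[
\mathcal{C}_2(A)=\{a_2\},\quad \mathcal{C}_3(A)=\{a_3\},\qquad \mathcal{C}_2(B)=\{b_2\},\quad \mathcal{C}_3(B)=\{b_3\}.
\]
Rates are fixed ($\underline r=\bar r$) and respect the plant's envelope, so that $\bar r_{i1}+\bar r_{i'2}<\rho$ and P3 absorbs the residual $r_3=\rho-r_1-r_2\ge 6$ tph, exactly as in Remark~\ref{rem:forced}. Fixing the rates removes the bilinearity: the tonnage of every member of a cluster becomes linear in the anchor hours $h_A$, $h_B$. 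The coupled-hour budget $H$ is shared by the two anchors, $h_A+h_B\le H$. All side constraints (minimum lots, transitions, purchases) are disabled, and the take-or-pay feed is sized to $S=\rho H$ with ample storage, so conversion of the full budget is mandatory.

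Prices are set as follows. Anchor $A$ gets the larger single-product margin, $\mathrm{SPM}_A>\mathrm{SPM}_B>0$. Its forced granule co-product $a_3$, however, has zero demand, so every ton of $a_3$ earns $-(c^{BOM}_{a_3 3}+b_3)$. To make this loss scale, the bill of materials of $a_3$ carries an additive $j$ with cost $\kappa$. Cluster $B$ has all members profitable, with demands rich enough to absorb $H$ hours. Under these choices the instance satisfies Assumption~\ref{ass:fluid}, which I take to be the rich-demand condition of the fluid relaxation; I will check its conditions one by one against the construction.

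\emph{Computing the three profits.} The margin ranking fills the calendar from $A$, giving $h_A=H$ and a profit
\[
\Pi_{\mathrm{SPM}}=H\,V_A(\kappa),\qquad V_A(\kappa)=c_0-r_{a_3 3}\,\kappa ,
\]
where $V_A$ is the per-coupled-hour AGPPC value of cluster $A$ with demand-aware effective margins. Because $V_A$ is affine and decreasing in $\kappa$, there is a threshold $\kappa_0$ beyond which $V_A<V_B$, and the AGPPC-greedy rule then commits all hours to $B$, earning $H\,V_B$. For the optimum, the problem restricted to this instance is an LP in $(h_A,h_B)$ over the simplex $h_A+h_B=H$, so $\Pi^\ast=H\max(V_A,V_B)=HV_B$. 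This is exactly the fluid relaxation optimum, and greedy attains it. I would cite the paper's fluid-exactness theorem if it is available; otherwise the direct vertex argument above suffices. Finally, the gap is
\[
\Pi^\ast-\Pi_{\mathrm{SPM}}=H\bigl(V_B-V_A(\kappa)\bigr)=H\bigl(V_B-c_0+r_{a_3 3}\kappa\bigr),
\]
which exceeds any $\Gamma$ once $\kappa$ is large, and $\Pi_{\mathrm{SPM}}<0$ as soon as $\kappa>c_0/r_{a_3 3}$.

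\emph{Main obstacle.} The difficult step is faithfulness: the construction must genuinely satisfy the full model and Assumption~\ref{ass:fluid}, not just an idealized version of them. Three points need care. First, the sync inequalities~\eqref{eq:syncmin}--\eqref{eq:syncmax} must hold with equality at the fixed rates. Second, the window constraints~\eqref{eq:dur1}--\eqref{eq:dur2} force $h_{a_2}=h_{a_3}=h_A$, which is what makes cluster hours the right unit. Third, the mandatory feed conversion must prevent the optimizer from idling in order to avoid $A$. There is also a risk that Assumption~\ref{ass:fluid} requires positive demand for all co-products. If so, I would give $a_3$ a small demand $\epsilon$ and let the cost parameter $\kappa$ carry the unboundedness, since the argument depends only on the per-hour loss of $A$ growing linearly in $\kappa$.
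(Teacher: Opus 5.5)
\textbf{The gap: your instance does not satisfy Assumption~\ref{ass:fluid}.} Item (v) of that assumption requires every cluster member to satisfy $d_m \ge \bar r_{mk}\bar H$. Your forced granule has $d_{a_3}=0 < r_{a_3 3}H$, so (v) fails outright. Your claim that ``under these choices the instance satisfies Assumption~\ref{ass:fluid}'' is therefore false, even though you yourself read the assumption as a rich-demand condition.

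The fallback of giving $a_3$ a small demand $\epsilon>0$ does not repair this. Condition (v) is not a positivity condition; it asks for $d_{a_3}\ge r_{a_3 3}H$, and with $d_{a_3}=\epsilon$ you are still in the saturating regime that (v) excludes. The intuition you built the loss on, a forced co-product whose demand is exhausted, is exactly what the hypothesis of the statement rules out. The paper invokes that picture only as an interpretation after the proof, not inside the construction. So your profit computations are fine for the instance you wrote down, but that instance lies outside the class the existential claim quantifies over.

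\textbf{The fix, which is the paper's route.} Keep every member's demand rich, $d_m \ge \bar r_{mk}H$ for all $m$, and get the unbounded per-hour loss from a negative \emph{true} margin on the forced co-product, i.e., a price below its direct cost.

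The paper's construction has these ingredients:
\begin{itemize}
\item two disjoint clusters, $\bar H=1$, $\rho=30$, and $e=0$;
\item anchor margins $105$ versus $100$;
\item granule margins $g_{A''}=-\gamma$ and $g_{B''}=+20$;
\item rate windows $[15,20]$ and $[6,10]$, which pin the mix to $(20,10)$ and so play the role of your fixed rates.
\end{itemize}
This gives $\Pi_{\mathrm{SPM}}=2100-10\gamma$ against $\Pi^\ast=2200$, a gap of $100+10\gamma$.

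Your own device already does this once you stop zeroing the demand. With $d_{a_3}$ rich, the margin $p_{a_3}-c^{BOM}_{a_3 3}-b_3$ decreases linearly in $\kappa$ and becomes negative for large $\kappa$. Effective and true margins then coincide, (v) holds, and Theorem~\ref{thm:greedy_opt} certifies $\Pi^\ast = HV_B$. Your formulas go through unchanged, with $c_0$ increased by the revenue term $p_{a_3}r_{a_3 3}$.

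\textbf{Unnecessary worries.} Your concerns about \eqref{eq:syncmin}--\eqref{eq:dur2} and about mandatory conversion can be dropped. Here $\Pi^\ast$ is the fluid optimum, the budget in $\mathrm{P_F}$ is an inequality, and $V_B>0$ already makes spending the full budget on $B$ optimal. Note also that ample storage would relax, not enforce, in-month conversion.
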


\begin{proof}
Consider two anchors $A, B$ with disjoint clusters, one period, budget $\bar{H} = 1$ hour, feed rate $\rho = 30$ tph, and for both clusters anchor rate bounds $[15, 20]$ and a forced granule co-product with rate bounds $[6,10]$, mirroring the industrial data. Let anchor margins be $g_A = 105$ and $g_B = 100$ \$/ton, granule co-product margins $g_{A''} = -\gamma$ and $g_{B''} = +20$ \$/ton with parameter $\gamma > 0$; set $e = 0$, and let every member's demand satisfy $d_m \ge \bar{r}_m \bar{H} = \bar{r}_m$, so that Assumption~\ref{ass:fluid}(v) holds and Theorem~\ref{thm:greedy_opt} applies. The margin criterion ranks $A$ first ($105 > 100$) and, maximizing the anchor's own rate, runs $A$ at $(r_1, r_3) = (20, 10)$, giving $\Pi_{\mathrm{SPM}} = 105 \cdot 20 - \gamma \cdot 10 = 2100 - 10\gamma$. AGPPC evaluates both clusters: $\mathrm{AGPPC}_B = 100 \cdot 20 + 20 \cdot 10 = 2200 \ge \mathrm{AGPPC}_A$ for all $\gamma \ge -10$, so it runs $B$ and attains $\Pi^\ast = 2200$, optimal by Theorem~\ref{thm:greedy_opt}. The gap $\Pi^\ast - \Pi_{\mathrm{SPM}} = 100 + 10\gamma$ grows without bound in $\gamma$, and $\Pi_{\mathrm{SPM}} < 0$ for $\gamma > 210$. Choosing $\gamma \ge (\Gamma - 100)/10$ proves the claim. 
\end{proof}

Proposition~\ref{prop:spm_bad} has two implications. First, no instance-independent guarantee is available for the margin criterion, so any confidence in it must come from the instance itself instead of from the rule. Second, and more importantly, the loss is silent. The parameter $\gamma$ corresponds to a co-product whose demand has been exhausted; by the effective margin \eqref{eq:effmargin} defined in the next section its value is its negative direct cost, on the order of $-900$ \$/ton in our data, far beyond the threshold at which margin-based planning becomes value-destroying. Nothing in the planning workbook reports this, because the workbook prices the anchor and never prices what the anchor drags along with it. The construction is adversarial by design, so what it establishes is the absence of a worst-case guarantee and not a claim about typical magnitudes. In Section~\ref{sec:exp_metric}, we show the empirical counterpart on the industrial instance and in Section~\ref{sec:exp_heur}, we expose the profit consequences.

\section{The AGPPC Framework}
\label{sec:agppc}
We propose the AGPPC framework to overcome the peril presented in Section~\ref{sec:whymislead}. We first give an overview of the approach and our target guarantees (Section~\ref{sec:overview_guar}). We then derive the economic object that should replace the margin, namely the coupled-hour value of a production cluster, and the allocation rule built on it (Sections~\ref{sec:columns}-\ref{sec:greedy_alg}). Finally we establish what that rule guarantees (Section~\ref{sec:theory}).

\subsection{Overview and Target Guarantees}
\label{sec:overview_guar}
Our framework comprises two layers. The first prices one hour of coupled operation at the current demand state, by reading the smallest committable unit off the compatibility constraints and valuing it. The second allocates the coupled-hour budget greedily by that price. The key to obtaining a guarantee lies in what the price measures, which requires two properties:
\begin{enumerate}
\item \emph{Commitment consistency}: the priced object must be exactly what the plant can commit to, i.e., a compatible triple of grades together with a rate mix that tracks the feed, so that no plan built from these prices can be infeasible for the coupling constraints 
\item \emph{State awareness}: the price must fall when a cluster member's remaining demand is exhausted, so that forced output which can only go to stock at cost is charged to its anchor.
\end{enumerate}
The first property is what makes the allocation feasible by construction; the second is what makes it economically correct as the plan is built. Together they give a rate of value creation per coupled hour, and the guarantee we target is that allocating the budget by that rate recovers the optimum of the planning problem, once the combinatorial side constraints that the metric deliberately ignores are relaxed. We call the relaxation in which this holds the \emph{fluid relaxation}, defined in Section~\ref{sec:theory}. From a risk quantification viewpoint, we can assert that because the relaxation is a linear program, its optimum is computable on any instance, so the ratio between the allocated profit and that optimum bounds the loss even when the hypotheses of the guarantee fail. This is the analogue, in a planning context, of shifting the target from an exact value to a bound that can be estimated correctly.

\subsection{Production Clusters and Co-Production Columns}
\label{sec:columns}
For an anchor grade $i$ producible on P1, define its \emph{cluster}
\begin{equation}
    \mathcal{G}(i) = \{(i,1)\} \cup \{(i',2): i' \in \mathcal{C}_2(i)\} \cup \{(i'',3): i'' \in \mathcal{C}_3(i)\}.
\end{equation}
By \eqref{eq:syncmax}, at any instant one co-product from $\mathcal{C}_2(i)$ and one from $\mathcal{C}_3(i)$ operate alongside the anchor, and by \eqref{eq:syncmin}-\eqref{eq:syncmax} their rates satisfy $r_1 + r_2 + r_3 = \rho_t$ within the machine bounds. We call a triple $c = (i, i', i'')$ with $i' \in \mathcal{C}_2(i)$ and $i'' \in \mathcal{C}_3(i)$ a \emph{co-production column}, and the set of rate vectors feasible for it is the polytope
\begin{equation}
\mathcal{R}_c(t) \;=\; \Big\{ (r_1,r_2,r_3) \;:\; r_1+r_2+r_3 = \rho_t, \;\; \underline{r}_{mk} \le r_m \le \bar{r}_{mk} \; \forall (m,k) \in c \Big\}.
\label{eq:column_poly}
\end{equation}
A column is thus the smallest unit of operation the model permits. The MIQCP never chooses a grade in isolation: it chooses a column and a point of $\mathcal{R}_c(t)$. In that view, our metric below simply prices that choice.

\subsection{The AGPPC Metric}
\label{sec:agppc_def}
Let 
\begin{equation}
g_{mk}(t) = p_{mt} - c^{BOM}_{mk} - b_k    
\end{equation}
 denote the unit margin of member $(m,k)$ and 
 \begin{equation}
     e(c) = c^E \sum_{k \in \mathrm{procs}(c)} e_k
 \end{equation}
 the electricity cost of one hour of coupled operation. Given the remaining demand vector $D = (D_1, \dots, D_I)$ at the current planning state, define the \emph{effective margin}
\begin{equation}
\tilde{g}_{mk}(t; D) \;=\;
\begin{cases}
g_{mk}(t) & D_m > 0 \quad \text{(output is sellable)}\\[2pt]
-\,c^{BOM}_{mk} - b_k & D_m = 0 \quad \text{(forced output goes to stock at cost)} .
\end{cases}
\label{eq:effmargin}
\end{equation}
The AGPPC of anchor $i$ is then the best achievable profit per hour of coupled operation over all of its columns and feasible rate mixes:
\begin{equation}
\mathrm{AGPPC}_{i}(t; D) \;=\; \max_{(i',i'')} \; \max_{r \in \mathcal{R}_{(i,i',i'')}(t)} \;\; \sum_{(m,k)} \tilde{g}_{mk}(t; D)\, r_{m} \;-\; e(i,i',i'').
\label{eq:agppc}
\end{equation}
The inner problem is a three-variable linear program over a simplex slice of a box, solvable in closed form or by any LP solver in microseconds, and the outer maximization enumerates the $|\mathcal{C}_2(i)| \times |\mathcal{C}_3(i)|$ co-product pairs. AGPPC has units of dollars per coupled hour, which is the shadow-price scale of the true bottleneck, and it is demand-state aware through \eqref{eq:effmargin}. We note that when clusters are trivial ($\mathcal{C}_2 = \mathcal{C}_3 = \emptyset$ and $\bar r_{i1} \geq \rho_t$), AGPPC reduces to $\mathrm{SPM}_i(t) \cdot \rho_t - e$, so that it generalizes the margin criterion into more complex coupled plants.

\subsection{A Demand-Aware Greedy Allocation}
\label{sec:greedy_alg}
With AGPPC, planners can rank and devise their plans. In Algorithm~\ref{alg:greedy}, we allocate the coupled bottleneck, namely the $\bar{H}_t = \min\{\bar{H}_1, S_t / \rho_t\}$ hours of feed conversion available per period, greedily by AGPPC. After each allocation the remaining demands and hence the effective margins change, so the metric is re-evaluated, and each step runs until the next demand-saturation breakpoint. The margin-based counterpart, which represents current practice in our experiments, differs in two places. First, its anchors are ranked by \eqref{eq:spm}, and second, the rate mix within a column maximizes the anchor's own rate instead of the cluster value in AGPPC.

\begin{algorithm}[htbp]
\caption{AGPPC-greedy allocation of coupled hours (one period $t$)}
\label{alg:greedy}
\begin{algorithmic}[1]
\State $\bar{H} \gets \min\{\bar{H}_1,\, S_t/\rho_t\}$; \quad $D_i \gets \max\{0,\, d_{it} - \sigma^{s}_{it}\}$ for all $i$
\While{$\bar{H} > 0$}
    \State for each anchor $i$: compute $\mathrm{AGPPC}_i(t; D)$, its optimal column $c_i^\ast$ and mix $r^\ast_i$ \Comment{Eq.~\eqref{eq:agppc}}
    \State $i^\ast \gets \arg\max_i \mathrm{AGPPC}_i(t; D)$
    \If{$\mathrm{AGPPC}_{i^\ast} \le 0$} \textbf{break} \Comment{no profitable coupled hour remains}
    \EndIf
    \State $\Delta \gets \min\Big\{\bar{H},\; \min_{m:\, D_m > 0} D_m \big/ \textstyle\sum_{k} r^\ast_{i^\ast,(m,k)}\Big\}$ \Comment{run to next saturation breakpoint}
    \State commit $(i^\ast, c^\ast_{i^\ast}, r^\ast_{i^\ast})$ for $\Delta$ hours; update $x, h$; $D_m \gets \max\{0, D_m - \Delta \sum_k r^\ast_{(m,k)}\}$; $\bar{H} \gets \bar{H} - \Delta$
\EndWhile
\State sell $z^{out}_{it} = \min\{d_{it},\, \sigma^{s}_{it} + \sum_k x_{ikt}\}$; carry ending stocks to $t{+}1$
\end{algorithmic}
\end{algorithm}

\subsection{Guarantees}
\label{sec:theory}
The AGPPC metric and its resulting algorithm were derived from the coupling structure of the model. The natural question is how much of the model's optimum they capture. We answer it on a fluid relaxation of the single-period problem that retains the coupling constraints, which drive the phenomenon under study, while relaxing the combinatorial side constraints. To that end, we first introduce the main assumption and definition.

\begin{assumption}
\label{ass:fluid}
(i) A single period with bottleneck budget $\bar{H} = \min\{\bar{H}_1, S_t/\rho_t\}$; (ii) demands act as sales caps and unsold output has zero value but full cost; (iii) minimum production quantities, transitions, production-flow connectivity, and stock and inventory side constraints are relaxed; (iv) prices and costs are linear; and (v) \emph{(non-saturation)} every cluster member satisfies $d_m \ge \bar{r}_{mk}\,\bar{H}$, so that no member's remaining demand can reach zero within the budget.
\end{assumption}

\begin{definition}[Fluid relaxation $\mathrm{P_F}$]
\label{def:fluid}
Over columns $c \in \mathcal{C}$ (all feasible co-production triples), choose hours $h_c \geq 0$, member production $q^c_{mk}$, and sales $s^c_{mk}$ to solve
\begin{align}
\Pi_F^\ast = \max \;\; & \sum_{c} \Big[ \sum_{(m,k) \in c} \big( p_m s^c_{mk} - (c^{BOM}_{mk}+b_k)\, q^c_{mk} \big) - e(c)\, h_c \Big] \label{eq:fluid_obj}\\
\text{s.t.} \;\;
& \underline{r}_{mk} h_c \leq q^c_{mk} \leq \bar{r}_{mk} h_c, \qquad s^c_{mk} \le q^c_{mk}, && \forall c, (m,k) \in c \nonumber\\
& \textstyle\sum_{(m,k) \in c} q^c_{mk} = \rho_t\, h_c, && \forall c \nonumber\\
& \textstyle\sum_c h_c \leq \bar{H}, \qquad \textstyle\sum_{c}\sum_{k} s^c_{ik} \leq d_i, && \forall i. \nonumber
\end{align}
\end{definition}

Comparing this relaxed model against Section~\ref{sec:model}, $\mathrm{P_F}$ keeps the rate bounds, the feed-tracking condition in per-column form, the hour budget, and the demand caps. It drops \eqref{eq:minprod}-\eqref{eq:ytrans3} and the inventory bounds, resulting in a linear program, and any plan produced by Algorithm~\ref{alg:greedy} is feasible for it, so $\Pi_F^\ast$ upper-bounds the heuristic's profit. For anchor $a$, let $V_a(h; d)$ denote the optimal value of $\mathrm{P_F}$ restricted to the columns of anchor $a$ with hour budget $h$ and demand caps $d$. We have the following result.

\begin{lemma}
\label{lem:concave}
Fix an anchor $a$. (a) $V_a(\cdot\,; d)$ is piecewise-linear and concave on $[0,\bar{H}]$. (b) Under Assumption~\ref{ass:fluid}, including (v), it is linear: $V_a(h; d) = \mathrm{AGPPC}_a(t; d)\, h$ for every $h \in [0, \bar{H}]$. (c) In general, $\mathrm{AGPPC}_a(t; D)$ evaluated at remaining demand $D$ is the rate of profit of continuing at the instantaneously best column and mix, and it upper-bounds the right derivative of $V_a$ at that state, with equality while no member of the cluster is saturated.
\end{lemma}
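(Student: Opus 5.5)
For (a), we treat $V_a(h;d)$ as the optimal value of a parametric linear program in which $h$ enters only through the right-hand side of the budget constraint $\sum_{c\ni a} h_c \le h$. First we note feasibility: for $h\ge 0$ the zero plan is feasible, and every column has a nonempty rate polytope $\mathcal{R}_c(t)$ by definition of $\mathcal{C}$. Second, the value is bounded, because sales are capped by $d$ and all costs are finite. Concavity then follows from the standard convex-combination argument. If $(h_c^1,q^1,s^1)$ is optimal at budget $h^1$ and $(h_c^2,q^2,s^2)$ is optimal at $h^2$, every constraint of $\mathrm{P_F}$ is linear and homogeneous in the decision variables except the right-hand sides $h$ and $d$. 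Hence the convex combination is feasible at budget $\theta h^1+(1-\theta)h^2$ with the same $d$. Because the objective is linear, this gives $V_a(\theta h^1+(1-\theta)h^2;d)\ge \theta V_a(h^1;d)+(1-\theta)V_a(h^2;d)$. Piecewise linearity follows from LP parametric theory: the dual feasible region does not depend on $h$, so $V_a(h;d)=\min_{\text{dual vertices}}(\text{affine in } h)$, a minimum of finitely many affine functions.

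For (b), we substitute $q^c_{mk}=r^c_{mk}h_c$ column by column. When $h_c>0$, the per-column constraints say exactly that $r^c\in\mathcal{R}_c(t)$. Under Assumption~\ref{ass:fluid}(v), every member satisfies $\sum_c q^c_{mk}\le \bar r_{mk}\sum_c h_c\le \bar r_{mk}\bar H\le d_m$. Here a small care point is a grade appearing in several columns or on several lines of the same cluster; we handle it by summing over at most the cluster's lines, or by reading (v) as stated per member. So the demand caps are slack, and optimal sales satisfy $s=q$ whenever $p_m>0$ (more generally, $s=q$ when $p_m\ge0$, else $s=0$). Since $D_m=d_m>0$ for all members, $\tilde g=g$, and the objective becomes $\sum_c h_c\,[\sum_{(m,k)} g_{mk} r^c_{mk}-e(c)]$. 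Each bracket is at most $\mathrm{AGPPC}_a(t;d)$ by \eqref{eq:agppc}. Hence $V_a(h;d)\le \mathrm{AGPPC}_a\cdot h$ whenever $\mathrm{AGPPC}_a\ge 0$ and the budget is used fully. For the reverse inequality we put all $h$ hours on the maximizing column and mix, which is feasible by (v), and obtain equality. One subtlety remains: if $\mathrm{AGPPC}_a<0$, the LP would choose $h_c=0$, so the identity must be read as the value of committing the $h$ hours. We will state that the budget constraint is binding in the restricted problem (hours committed to anchor $a$), or else note that the paper's convention treats $V_a$ as the committed-hours value.

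For (c), we fix a state with remaining demand $D$ and consider an extra $\epsilon$ hours. An upper bound comes from the concave structure. Any continuation using $\epsilon$ hours splits into columns and mixes with $q=r\epsilon$. Its incremental profit is at most $\sum_c \epsilon_c[\sum \tilde g_{mk}r^c_{mk}-e(c)] \le \epsilon\,\mathrm{AGPPC}_a(t;D)$, because sales of a member with $D_m=0$ earn nothing and members with $D_m>0$ earn at most $p_m$ per ton. So the effective margin is a pointwise upper bound on the per-ton increment. Dividing by $\epsilon$ and letting $\epsilon\downarrow0$ bounds the right derivative. For equality when no member is saturated ($D_m>0$ for all members), we run the maximizing column and mix for $\epsilon\le \min_m D_m/\sum_k r^\ast_{(m,k)}$ hours. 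All output is sold, and the increment is exactly $\epsilon\,\mathrm{AGPPC}_a$.

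The main obstacle is making ``right derivative of $V_a$ at that state'' rigorous, since $V_a$ is defined from scratch rather than from a partial plan. We would define the state-dependent value as $V_a(h_0+\epsilon;d)-V_a(h_0;d)$, with $D=d-(\text{sales of an optimal plan at }h_0)$. Then we would argue via the LP dual (shadow price of the hour constraint equals the right derivative for concave piecewise-linear $V_a$) or, more simply, via the greedy-path interpretation of Algorithm~\ref{alg:greedy}. We expect to restrict the claim to the latter, greedy-path sense.
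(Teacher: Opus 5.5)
Your (a) and (b) follow the paper's argument. In (a), $h$ enters the restricted $\mathrm{P_F}$ only as a right-hand side. In (b), (v) makes the caps slack, each column contributes $h_c[\sum g_{mk}r^c_{mk}-e(c)]\le h_c\,\mathrm{AGPPC}_a$, and the maximizing column and mix attain this.

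Both caveats you raise are real, and the paper's proof skips both. First, with an inequality budget and $\mathrm{AGPPC}_a<0$ one has $V_a\equiv 0$. So (b) must read $\max\{\mathrm{AGPPC}_a,0\}\,h$, or the hours must be committed. Second, a self-compatible anchor (the data has them, e.g.\ G15) puts one grade on P1 and P2 in the same column. Its sales can then reach $(r_1+r_2)h$, which the per-member condition $d_m\ge\bar r_{mk}\bar H$ does not cover. Of your two remedies, only the summed condition $d_m\ge\sum_k\bar r_{mk}\bar H$ works; ``reading (v) per member'' does not.

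The genuine gap is in (c). Your continuation bound is correct and more explicit than the paper's one-line remark: effective margins bound per-ton value, so $\epsilon$ hours at state $D$ earn at most $\epsilon\,\mathrm{AGPPC}_a(t;D)$, and the best mix attains this before any breakpoint. However, your last paragraph proposes to bound $V_a(h_0+\epsilon;d)-V_a(h_0;d)$ through the LP shadow price, with $D$ the residual demand of an optimal plan at $h_0$. That version is false, so no argument can close it.

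Here is a counterexample. Take one column with the P2 member at rate bounds $[0,0]$, $e=0$, $\rho=30$, $r_a\in[15,24]$, $r_b\in[6,15]$ and $\bar H=25$. Let $g_a=100$ with ample demand, and let the granule have price $1100$ and direct cost $900$ (so $g_b=200$) with $d_b=150$.
\begin{itemize}
\item $V_a(h;d)=4500h$ on $[0,10]$.
\item $V_a(h;d)=15000+3000h$ on $[10,25]$, obtained by throttling the granule to $150/h$ tph.
\item At $h_0=10$ the unique optimal plan sells all 150 t, so $D_b=0$ and $\mathrm{AGPPC}_a(t;D)=100\cdot 24-900\cdot 6=-3000$.
\item Meanwhile $\partial^+V_a(10;d)=3000$, so the claimed upper bound fails.
\end{itemize}
The re-optimized relaxation throttles the saturating member from the outset, which is exactly the effect the paper's own sketch of (c) describes. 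Its marginal value can therefore exceed the continuation rate.

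What you should prove instead is the statement about the residual function $V_a(\cdot\,;D)$. Your pointwise bound already gives $V_a(h;D)\le\max\{\mathrm{AGPPC}_a(t;D),0\}\,h$ for every $h$, not only for small $\epsilon$. Combined with $V_a(0;D)=0$ and the concavity from (a), this bounds every right derivative of $V_a(\cdot\,;D)$, with equality up to the first saturation breakpoint. Carry the sign convention from (b) into (c) as well: in the example, $V_a(\cdot\,;D)\equiv 0>-3000$ under an inequality budget.
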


\begin{proof}
(a) $V_a(h; d)$ is the optimal-value function of a linear program in which $h$ enters only the right-hand side of the budget constraint; such value functions are piecewise-linear and concave. (b) Under (v), any plan that allocates $h \le \bar{H}$ hours to anchor $a$ produces at most $\bar{r}_{mk} h \le \bar{r}_{mk} \bar{H} \le d_m$ tons of each member, so every demand cap of $\mathrm{P_F}$ restricted to $a$ is slack and effective margins~\eqref{eq:effmargin} coincide with true margins throughout. The restricted program then reduces to choosing, at each instant, a column $c$ of $a$ and a mix $r \in \mathcal{R}_c(t)$, with instantaneous profit $\sum_{(m,k)} g_{mk}(t) r_m - e(c) \le \mathrm{AGPPC}_a(t;d)$ by \eqref{eq:agppc}; holding the maximizing pair for $h$ hours is feasible and attains the bound. (c) The first claim is~\eqref{eq:agppc} evaluated at $D$. For the inequality, note that the continuation value ignores a cost the relaxation can avoid: consuming a member's remaining demand at the instantaneously best rate leaves that member producing at its lower rate bound $\underline{r}_{mk}$, at pure cost, for the remainder of the run, whereas $\mathrm{P_F}$ may throttle the member from the outset so that its demand lasts the whole run. 
\end{proof}

\noindent Lemma~\ref{lem:concave} is the bridge between the metric and the model. AGPPC is not an ad-hoc score but the rate at which a coupled hour creates value in the relaxed model at the current demand state. Part (c) also marks the boundary of that correspondence, and the following theorem states the regime in which it is exact.

\begin{theorem}[Optimality under non-saturating demand]
\label{thm:greedy_opt}
Suppose Assumption~\ref{ass:fluid} holds, including (v). Then Algorithm~\ref{alg:greedy} returns an optimal solution of the fluid relaxation, $\Pi_{\mathrm{greedy}} = \Pi_F^\ast$. 
\end{theorem}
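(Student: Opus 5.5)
Under Assumption~\ref{ass:fluid}(v) the demand caps of $\mathrm{P_F}$ are never binding, so $\mathrm{P_F}$ reduces to a fractional knapsack over columns with a single resource, the hour budget; greedy by value per hour is optimal for that. Concretely, I will prove two inequalities, $\Pi_{\mathrm{greedy}} \le \Pi_F^\ast$ and $\Pi_F^\ast \le \Pi_{\mathrm{greedy}}$. The first is already noted after Definition~\ref{def:fluid}: every plan produced by Algorithm~\ref{alg:greedy} is feasible for $\mathrm{P_F}$. I would only check that each committed segment $(i^\ast, c^\ast, r^\ast, \Delta)$ maps to $h_c = \Delta$, $q^c_{mk} = r^\ast_m \Delta$ and $s^c_{mk} = q^c_{mk}$, and that this respects the rate bounds, the feed identity and the budget.

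\emph{Step 1: the greedy run is a single segment.} Under (v), any member's output over at most $\bar{H}$ hours is at most $\bar{r}_{mk}\bar{H} \le d_m$. (Here I take $D = d$, since the single-period relaxation has no opening stock.) So no $D_m$ reaches zero before the budget is spent. The breakpoint in line~6 therefore equals $\bar{H}$, or is at least never smaller than the remaining budget, and the effective margins~\eqref{eq:effmargin} stay equal to $g_{mk}$. If $\max_a \mathrm{AGPPC}_a(t;d) > 0$, the algorithm commits the anchor $a^\ast$ with the largest AGPPC, through its optimal column and mix, for all $\bar{H}$ hours. All output is sold, so $\Pi_{\mathrm{greedy}} = \bar{H}\max_a \mathrm{AGPPC}_a(t;d)$. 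If the maximum is nonpositive, the algorithm breaks immediately and $\Pi_{\mathrm{greedy}} = 0 = \bar{H}\max\{0,\max_a \mathrm{AGPPC}_a\}$.

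\emph{Step 2: upper bound on $\Pi_F^\ast$.} Take any feasible solution of $\mathrm{P_F}$ and split it by anchor, $h_a = \sum_{c \ni a} h_c$. The demand constraint is shared across anchors, but dropping the coupling between anchors only enlarges the feasible set. Each anchor's part is then feasible for the restricted program defining $V_a(h_a; d)$. Summing gives $\Pi_F^\ast \le \max\{\sum_a V_a(h_a;d) : \sum_a h_a \le \bar{H},\ h_a \ge 0\}$. By Lemma~\ref{lem:concave}(b), $V_a(h_a;d) = \mathrm{AGPPC}_a(t;d)\,h_a$. The right-hand side is therefore a linear program over a scaled simplex, with optimum $\bar{H}\max\{0,\max_a \mathrm{AGPPC}_a\}$. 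Combined with Step~1, this gives $\Pi_F^\ast \le \Pi_{\mathrm{greedy}}$.

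The main obstacle is Step~2. Lemma~\ref{lem:concave}(b) is stated with budget $h \le \bar{H}$ and demand caps $d$ for one anchor in isolation. It must be applied simultaneously to all anchors, while the true $\mathrm{P_F}$ shares demand caps across clusters that may overlap in members. The relaxation argument above covers this, because dropping shared caps can only increase the value, and (v) guarantees each cap is individually slack anyway. A second point to check is that inside $V_a$ the sales $s \le q$ may be chosen below $q$. With nonbinding caps and positive prices, setting $s = q$ is optimal, so the per-hour value is exactly the LP in~\eqref{eq:agppc} with true margins, which is what Lemma~\ref{lem:concave}(b) asserts. If a price could be negative, I would note that $p_m s$ is then maximized at $s=0$, and assume $p \ge 0$ as implied by (iv).
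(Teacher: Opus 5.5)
Your proposal is correct and follows the paper's proof. Both arguments use Lemma~\ref{lem:concave}(b) to reduce $\mathrm{P_F}$ to a single-budget continuous knapsack over anchors with linear values $\mathrm{AGPPC}_a(t;d)\,h_a$, note that under (v) Algorithm~\ref{alg:greedy} commits the whole budget to an anchor of maximal AGPPC (or nothing if that maximum is nonpositive), and close with feasibility of the greedy plan for $\mathrm{P_F}$. The only variation is that you obtain the knapsack as an upper bound by splitting the shared demand caps into per-anchor caps, which is the device the paper uses for Corollary~\ref{cor:check}, rather than deleting the caps as slack at every feasible point; this spares you the cross-anchor slackness claim for overlapping clusters and otherwise changes nothing.
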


\begin{proof}
By Lemma~\ref{lem:concave}(b) every demand cap of $\mathrm{P_F}$ is slack at every feasible point, so the caps may be deleted without changing $\Pi_F^\ast$. The demand caps are the only constraints of $\mathrm{P_F}$ that couple distinct anchors; once they are gone the program separates, and by Lemma~\ref{lem:concave}(b) each anchor's contribution is linear in its allocation. Hence
\begin{equation*}
    \Pi_F^\ast = \max \Big\{ \sum_a \mathrm{AGPPC}_a(t;d)\, h_a \;:\; \sum_a h_a \leq \bar{H},\; h_a \geq 0 \Big\},
\end{equation*}
a continuous knapsack with a single budget, solved by placing the entire budget on any anchor of maximal AGPPC, and by allocating nothing if that maximum is nonpositive. This is exactly what Algorithm~\ref{alg:greedy} does: under (v) the metric is constant along the allocation, no saturation breakpoint is reached, and the loop commits the whole budget to $\arg\max_a \mathrm{AGPPC}_a(t;d)$, stopping early only when the maximum is nonpositive. The greedy allocation is feasible for $\mathrm{P_F}$ by construction, hence optimal. Because the coupling between anchors was removed by (v) rather than by an assumption on cluster structure, disjointness plays no role. 
\end{proof}

Theorem~\ref{thm:greedy_opt} both generalizes and narrows the usual disjointness argument: it dispenses with any condition on cluster structure, but it holds only where demands are rich relative to the coupled-hour budget. However, it could break when \emph{demands saturate}. Once a member's remaining demand reaches zero its forced output is made at pure cost, and Algorithm~\ref{alg:greedy}, which maximizes the instantaneous rate of profit, spends scarce demand at the highest admissible rate instead of spreading it across the run; by Lemma~\ref{lem:concave}(c) the realized value then falls short of $V_a$. Also, its effectiveness is reduced when \emph{clusters overlap}. In our example, the granule co-products are shared across several anchors in the industrial instance, so the demand caps couple anchors even after each anchor's own value function is understood. Section~\ref{sec:exp_theory} separates the two effects on controlled instances to complete our analysis. The next result addresses the side constraint most likely to bind in practice.

\begin{theorem}[Additive guarantee under minimum-run constraints]
\label{thm:additive}
Let $\mathrm{P_F^{min}}$ be $\mathrm{P_F}$ with semi-continuous hours, i.e., each opened anchor must receive at least $\underline{h}_a = \underline{q}_a / \bar{r}_{a1}$ hours. Let $\Pi^\ast_{\min}$ be its optimum and let Algorithm~\ref{alg:greedy} be run with the same restriction, skipping anchors whose remaining budget cannot cover $\underline{h}_a$ and truncating the final allocation. Then, under Assumption~\ref{ass:fluid},
\[
\Pi_{\mathrm{greedy}} \;\geq\; \Pi^\ast_{\min} \;-\; \max_a\, \mathrm{AGPPC}^0_a \cdot \underline{h}_a,
\]
where $\mathrm{AGPPC}^0_a$ denotes the metric evaluated at full demand.
\end{theorem}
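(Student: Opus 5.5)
The plan is to reduce the semi-continuous problem to the continuous knapsack of Theorem~\ref{thm:greedy_opt}, and then charge the whole effect of the minimum-run restriction to the final, truncated allocation.

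\emph{Step 1: upper bound on $\Pi^\ast_{\min}$.} I will use Assumption~\ref{ass:fluid}, including (v), in the same way as the proof of Theorem~\ref{thm:greedy_opt}. Every demand cap of $\mathrm{P_F}$ is slack, so by Lemma~\ref{lem:concave}(b) each anchor contributes $V_a(h_a;d)=\mathrm{AGPPC}^0_a h_a$. The semi-continuity restriction only shrinks the feasible set of hour vectors, so
\[
\Pi^\ast_{\min}\le \Pi_F^\ast=\max\Big\{\sum_a \mathrm{AGPPC}^0_a h_a:\ \sum_a h_a\le\bar H,\ h_a\ge 0\Big\}=\bar H\,\max\{0,\max_a \mathrm{AGPPC}^0_a\}.
\]
Here $\mathrm{AGPPC}^0_a$ equals $\mathrm{AGPPC}_a(t;D)$ along the whole allocation, because under (v) no breakpoint is reached. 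If no anchor has a positive metric, both sides are zero and the claim is trivial. Otherwise let $a^\ast$ be the anchor with the largest metric among those with $\underline h_a\le\bar H$, since only these can be opened in $\mathrm{P_F^{min}}$. Then $\Pi^\ast_{\min}\le \mathrm{AGPPC}^0_{a^\ast}\bar H$ after also discarding anchors that cannot meet their own minimum run, a refinement of the same bound.

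\emph{Step 2: what the greedy does.} The metric is constant, so the modified Algorithm~\ref{alg:greedy} scans anchors in decreasing $\mathrm{AGPPC}^0$ order. It skips any anchor whose $\underline h_a$ exceeds the remaining budget and otherwise commits as much of the remaining budget as it can. Its profit is $\sum_a \mathrm{AGPPC}^0_a h^g_a$ with $\sum_a h^g_a\le\bar H$. The anchor $a^\ast$ is never skipped at the start, because the full budget $\bar H\ge \underline h_{a^\ast}$ is available. Without truncation the greedy would therefore place all of $\bar H$ on $a^\ast$ and match the bound exactly.

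\emph{Step 3: bounding the truncation loss.} The idle or lower-valued hours come only from the final step, where the remaining budget $R$ is smaller than the next anchor's $\underline h_a$, or where truncation leaves part of a minimum run unusable. The loss relative to $\mathrm{AGPPC}^0_{a^\ast}\bar H$ is then at most the value of a leftover sliver of length below $\underline h_a$ for the anchor $a$ that could not be opened. That gives the bound $\mathrm{AGPPC}^0_a\underline h_a\le\max_a \mathrm{AGPPC}^0_a\underline h_a$. Anchors with nonpositive metric add nothing to either side, so the bound holds with the stated maximum.

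\emph{Main obstacle.} I expect the hardest step to be making Step 3 rigorous for the truncated final allocation. The worry is that truncating could strand a partial run below its own minimum, or lose value from an anchor other than $a^\ast$. I will first try an exchange argument: compare greedy with an optimal solution of $\mathrm{P_F^{min}}$ and show the two differ on at most one anchor's minimum-run block. This is the standard one-fractional-item argument for knapsacks, which is what limits the loss to a single block $\mathrm{AGPPC}^0_a\underline h_a$.
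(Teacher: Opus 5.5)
Your Steps~1 and~2 already form a complete proof, and it is a more direct route than the paper's.

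\textbf{The paper's route.} The paper never works inside $\mathrm{P_F^{min}}$. It uses $\Pi^\ast_{\min}\le\Pi_F^\ast$ and runs the unrestricted greedy of Theorem~\ref{thm:greedy_opt} to attain $\Pi_F^\ast$. It then observes that only the last anchor opened can violate its minimum run, closes that anchor, and bounds the forfeited value by $\mathrm{AGPPC}^0_a\underline h_a$ via concavity (Lemma~\ref{lem:concave}). The additive term is the price of measuring against the looser benchmark $\Pi_F^\ast$. The argument also tacitly assumes that the restricted greedy does at least as well as this closed-off copy of the unrestricted allocation.

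\textbf{Your route.} You bound $\Pi^\ast_{\min}$ directly by $\bar H\max\{0,\mathrm{AGPPC}^0_{a^\ast}\}$, with $a^\ast$ the best anchor satisfying $\underline h_a\le\bar H$. The restricted greedy faces a constant metric, so it skips exactly the anchors with $\underline h_a>\bar H$ and commits all of $\bar H$ to $a^\ast$. That finishes the proof with zero loss: $\Pi_{\mathrm{greedy}}=\Pi^\ast_{\min}$.

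\textbf{Step~3 is not needed.} Under (v) no breakpoint is reached and the budget is exhausted in the first commitment. There is no next anchor, leftover sliver, or truncated run to charge, so the ``main obstacle'' you anticipate, and the exchange argument, do not arise. The sliver bound as you wrote it would also be wrong. Hours forgone relative to $\mathrm{AGPPC}^0_{a^\ast}\bar H$ are priced at $a^\ast$'s rate, not at the rate of the anchor that could not be opened. That gives $\mathrm{AGPPC}^0_{a^\ast}\underline h_a$, which need not be dominated by $\max_a\mathrm{AGPPC}^0_a\underline h_a$.

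\textbf{What each route buys.} Yours gives a sharper conclusion (exactness for $\mathrm{P_F^{min}}$ under (v)) and analyzes the algorithm actually run. The paper's gives a knapsack-style ``one truncated item'' template, which is the shape an argument would need outside the non-saturating regime. As written, though, it also rests on Theorem~\ref{thm:greedy_opt} and hence on (v).

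\textbf{Two small repairs.}
\begin{enumerate}
\item In Step~1, keep the $\max\{0,\cdot\}$ in the refined bound. $a^\ast$ may have a nonpositive metric when every profitable anchor has $\underline h_a>\bar H$; then the greedy stops immediately and both values are $0$.
\item The all-negative case is not ``trivial'' for the literal inequality. If every $\mathrm{AGPPC}^0_a<0$ and $\underline h_a>0$, the right-hand side equals $-\max_a\mathrm{AGPPC}^0_a\underline h_a>0=\Pi_{\mathrm{greedy}}$. The additive term must therefore be read as its positive part, a point the paper's proof also glosses over.
\end{enumerate}
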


\begin{proof}
Since $\mathrm{P_F}$ relaxes $\mathrm{P_F^{min}}$, $\Pi^\ast_{\min} \leq \Pi_F^\ast$. Run the unrestricted greedy of Theorem~\ref{thm:greedy_opt}, which attains $\Pi_F^\ast$. Its allocation satisfies the minimum-run restriction at every opened anchor except possibly the last one opened before budget exhaustion, whose allocation may fall below $\underline{h}_a$. Closing that anchor produces a feasible allocation for $\mathrm{P_F^{min}}$ and forfeits at most the value of the truncated run. By concavity (Lemma~\ref{lem:concave}) the per-hour value of any run never exceeds its at-opening marginal value, which is at most $\mathrm{AGPPC}^0_a$, so the forfeited value is at most $\mathrm{AGPPC}^0_a\, \underline{h}_a \le \max_a \mathrm{AGPPC}^0_a\, \underline{h}_a$. Therefore $\Pi_{\mathrm{greedy}} \geq \Pi_F^\ast - \max_a \mathrm{AGPPC}^0_a \underline{h}_a \geq \Pi^\ast_{\min} - \max_a \mathrm{AGPPC}^0_a \underline{h}_a$. 
\end{proof}

The additive term of Theorem~\ref{thm:additive} is small in practice. In the industrial data, $\underline{h}_a \le \underline{q}_a / \bar r_{a1} \approx 50/20 = 2.5$ hours against a monthly budget of roughly $700$ hours, so the additive loss is below $0.5\%$ of horizon profit and the minimum-lot rule is essentially free for the heuristic. Furthermore, while the two guarantees above are \emph{a priori} (i.e., they hold on instances meeting their hypotheses), the next result is their \emph{a posteriori} counterpart.

\begin{proposition}[Per-instance optimality certificate]
\label{prop:cert}
Let an instance satisfy Assumption~\ref{ass:fluid}(i)-(iv), with clusters possibly overlapping and demands possibly saturating. Every plan returned by Algorithm~\ref{alg:greedy} is feasible for $\mathrm{P_F}$, hence $\Pi_{\mathrm{greedy}} \le \Pi_F^\ast$. Since $\mathrm{P_F}$ is a linear program, $\Pi_F^\ast$ is computable in polynomial time, and the ratio $\Pi_{\mathrm{greedy}} / \Pi_F^\ast$ is a valid optimality certificate for that instance, requiring neither disjointness nor Assumption~\ref{ass:fluid}(v).
\end{proposition}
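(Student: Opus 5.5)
The proof has three parts: feasibility of the greedy plan for $\mathrm{P_F}$, the resulting inequality, and polynomial computability of $\Pi_F^\ast$. For feasibility, I will build an explicit feasible point of $\mathrm{P_F}$ from the output of Algorithm~\ref{alg:greedy}. The algorithm commits a finite sequence of steps $s=1,\dots,S$, each consisting of a column $c_s$, a mix $r^{(s)}\in\mathcal{R}_{c_s}(t)$, and a duration $\Delta_s$.

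The construction aggregates by column. Set $h_c=\sum_{s:c_s=c}\Delta_s$ and $q^c_{mk}=\sum_{s:c_s=c}\Delta_s r^{(s)}_m$. Since each $r^{(s)}$ lies in the polytope \eqref{eq:column_poly}, summing the relations $\underline r_{mk}\Delta_s\le\Delta_s r^{(s)}_m\le\bar r_{mk}\Delta_s$ and $\sum_m \Delta_s r^{(s)}_m=\rho_t\Delta_s$ over $s$ gives the rate bounds and the feed-tracking equality of $\mathrm{P_F}$ column by column. The hour budget holds because the loop only subtracts $\Delta_s\le\bar H$ from the remaining budget, which is initialised to $\min\{\bar H_1,S_t/\rho_t\}$. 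Termination and finiteness of the step sequence follow because each non-final step saturates at least one member, and there are finitely many members.

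For sales, I allocate the realized sales $z^{out}_i=\min\{d_i,\sigma^s_i+\sum_k x_{ik}\}$ to columns. Under (iii), initial stock is taken as zero or relaxed, so $z^{out}_i\le\sum_c\sum_k q^c_{ik}$. I split $z^{out}_i$ across the $(c,k)$ entries in proportion to production, which gives $s^c_{mk}\le q^c_{mk}$ and $\sum_c\sum_k s^c_{ik}=z^{out}_i\le d_i$. This sales allocation is where overlapping clusters could seem troublesome, and the proportional split handles shared granule co-products without any disjointness assumption.

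Next I check that the $\mathrm{P_F}$ objective at this point equals $\Pi_{\mathrm{greedy}}$ as defined by \eqref{eq:profit} under Assumption~\ref{ass:fluid}(ii),(iv). Revenue is $\sum p_i z^{out}_i$. Material and bagging costs equal $\sum (c^{BOM}_{mk}+b_k)q^c_{mk}$, with each ton of $x$ mapped to exactly one column entry. Electricity equals $\sum_c e(c)h_c$, consistent with the convention that $e(c)$ charges all three coupled processes per hour. Feasibility together with equality of objectives gives $\Pi_{\mathrm{greedy}}\le\Pi_F^\ast$ by the definition of $\Pi_F^\ast$ as a maximum.

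The main obstacle is reconciling accounting conventions, specifically whether Algorithm~\ref{alg:greedy}'s unsold output and the effective-margin bookkeeping \eqref{eq:effmargin} are scored exactly as in \eqref{eq:fluid_obj}. The effective margins only guide the algorithm's choices; the realized profit is recomputed from $x$, $h$ and $z^{out}$. So I would stress that the certificate compares realized profits, and that (ii) makes unsold output carry full cost and zero value in both objectives.

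Finally, $\mathrm{P_F}$ is a linear program with $O(|\mathcal{C}|\cdot 3)$ variables and constraints. The number of columns satisfies $|\mathcal{C}|\le\sum_i|\mathcal{C}_2(i)||\mathcal{C}_3(i)|$, which is polynomial in the input size. Hence $\Pi_F^\ast$ is computable in polynomial time by interior-point methods. When $\Pi_F^\ast>0$, the bound $\Pi_{\mathrm{greedy}}/\Pi_F^\ast\le 1$ and the fact that $\Pi_F^\ast$ upper-bounds every plan's profit make the ratio a lower bound on the fraction of the relaxed optimum attained. The argument never invoked (v) or disjointness.
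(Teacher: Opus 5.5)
Your proof is correct and follows the paper's argument: the greedy plan satisfies the constraints of $\mathrm{P_F}$ (column rate mixes in $\mathcal{R}_c(t)$, the hour budget, the demand caps), so its profit is bounded by the LP optimum, which is computable in polynomial time. Your version is more careful than the paper's one-line proof. You aggregate steps by column and split sales proportionally so that shared co-products need no disjointness. You also check that the $\mathrm{P_F}$ objective at the constructed point equals the realized profit, and you state explicitly the zero-opening-stock reading of (iii), which the paper leaves implicit.
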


\begin{proof}
Algorithm~\ref{alg:greedy} only ever commits a column $c$ at a point of $\mathcal{R}_c(t)$, never exceeds the budget $\bar{H}$, and never sells more than remaining demand; these are exactly the constraints of $\mathrm{P_F}$, so the returned plan is feasible for it. A feasible point of a maximization program is bounded by its optimum, and $\mathrm{P_F}$ is a linear program by Definition~\ref{def:fluid}. 
\end{proof}

\begin{corollary}[Checkable condition for exactness]
\label{cor:check}
If Algorithm~\ref{alg:greedy} exhausts the hour budget without any member's remaining demand reaching zero, then $\Pi_{\mathrm{greedy}} = \Pi_F^\ast$. 
\end{corollary}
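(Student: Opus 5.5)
The plan is to compare the greedy value with an explicit upper bound on $\Pi_F^\ast$ that the greedy run attains exactly. This follows the proof of Theorem~\ref{thm:greedy_opt}, but replaces the a priori hypothesis (v) with the a posteriori observation in the statement. Let $D^0$ be the remaining-demand vector at the start of Algorithm~\ref{alg:greedy}, and read the demand caps $d_i$ of $\mathrm{P_F}$ as these net demands $D^0_i$. Let $\mathcal{Z} = \{m : D^0_m = 0\}$ be the members that are already exhausted. The candidate bound is
\[
\Pi_F^\ast \;\le\; \bar{H}\cdot \max\Big\{0,\; \max_a \mathrm{AGPPC}_a(t; D^0)\Big\}.
\]

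\textbf{Step 1 (upper bound).} Build a further relaxation of $\mathrm{P_F}$. For members outside $\mathcal{Z}$, delete the demand caps $\sum_c\sum_k s^c_{ik} \le d_i$. For members in $\mathcal{Z}$, keep the caps, which force $s^c_{mk}=0$.
\begin{itemize}
\item In this relaxation, sales $s^c_{mk}=q^c_{mk}$ are optimal for $m \notin \mathcal{Z}$, so each unit earns $g_{mk}(t)$. Each unit of $m \in \mathcal{Z}$ earns $-c^{BOM}_{mk}-b_k$. These are exactly the effective margins $\tilde g_{mk}(t;D^0)$ of \eqref{eq:effmargin}.
\item Substitute $q^c = r\,h_c$. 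The constraints $\underline{r}_{mk}h_c \le q^c_{mk} \le \bar r_{mk}h_c$ and $\sum q^c = \rho_t h_c$ say precisely that $r \in \mathcal{R}_c(t)$ whenever $h_c>0$.
\item The contribution of column $c$ is therefore at most $h_c\big(\max_{r\in\mathcal{R}_c}\sum \tilde g\, r - e(c)\big) \le h_c\,\mathrm{AGPPC}_{a(c)}(t;D^0)$ by \eqref{eq:agppc}.
\end{itemize}
Summing over columns under $\sum_c h_c \le \bar H$ gives the continuous knapsack bound displayed above. This is the same separation argument as in Theorem~\ref{thm:greedy_opt}, but here it is used only as an inequality.

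\textbf{Step 2 (greedy attains the bound).} The hypothesis says that no member's remaining demand reaches zero during the run. Hence the set of members with $D_m>0$ never changes, so the effective margins, and with them every $\mathrm{AGPPC}_i(t;D)$, stay equal to their values at $D^0$ throughout.
\begin{itemize}
\item Consequently every step selects an anchor attaining $\max_a \mathrm{AGPPC}_a(t;D^0)$, together with a maximizing column and mix.
\item Because the budget is exhausted rather than the loop breaking, this maximum is positive and the committed hours total $\bar H$.
\item No saturation breakpoint is hit, so every committed ton of a member $m\notin\mathcal{Z}$ is at most the remaining demand and is sold at $g_{mk}$. Members in $\mathcal{Z}$ are charged their cost, exactly as in $\tilde g$.
\end{itemize}
So the realized profit per hour equals $\mathrm{AGPPC}_{i^\ast}(t;D^0)$, and $\Pi_{\mathrm{greedy}} = \bar H \max_a \mathrm{AGPPC}_a(t;D^0)$.

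\textbf{Step 3 (conclusion).} Combining Steps 1 and 2 with Proposition~\ref{prop:cert} gives the chain $\Pi_F^\ast \le \Pi_{\mathrm{greedy}} \le \Pi_F^\ast$, so $\Pi_{\mathrm{greedy}} = \Pi_F^\ast$.

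The main obstacle is the bookkeeping in Step 2. One must check that the sales rule $z^{out} = \min\{d, \sigma^s + \sum_k x\}$ of Algorithm~\ref{alg:greedy} and the netting of opening stock into $D^0$ produce exactly the objective of $\mathrm{P_F}$ with caps $D^0$. One must also check that "not reaching zero" is interpreted as non-saturation of members that started with positive demand; members in $\mathcal{Z}$ are treated consistently on both sides. If this alignment failed, I would instead certify optimality directly by LP duality. The dual would put multiplier $\mu=\max_a \mathrm{AGPPC}_a(t;D^0)$ on the hour budget and zero on the demand caps of $m\notin\mathcal{Z}$, and I would verify dual feasibility column by column via \eqref{eq:agppc}.
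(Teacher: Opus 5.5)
Your proposal is correct and follows essentially the same three-step route as the paper: bound $\Pi_F^\ast$ above by $\bar H \max_a \mathrm{AGPPC}_a$ at the initial demand state, show the greedy run attains this value because the effective margins never change, and close with Proposition~\ref{prop:cert}. The paper gets the upper bound by relaxing the shared caps to per-anchor caps and bounding each $V_a(h;d)$ linearly, whereas you delete the caps of unexhausted members outright and bound column by column; the resulting bound is the same, and your explicit treatment of the $\max\{0,\cdot\}$ and of members already exhausted at the start makes precise two points the paper leaves implicit.
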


\begin{proof}
Write $\mathrm{AGPPC}^0_a = \mathrm{AGPPC}_a(t;d)$ for the metric at full demand. For any anchor $a$ and any $h \le \bar{H}$ we have $V_a(h;d) \le \mathrm{AGPPC}^0_a\, h$, since the objective of $\mathrm{P_F}$ restricted to $a$ is the time integral of $\sum_{(m,k)} \tilde g_{mk} r_m - e(c)$ and effective margins never exceed true margins. Relaxing the shared demand caps to per-anchor caps can only increase the optimum and makes it separable, so $\Pi_F^\ast \le \max\{\sum_a V_a(h_a;d) : \sum_a h_a \le \bar H\} \le (\max_a \mathrm{AGPPC}^0_a)\, \bar{H}$. Under the stated condition the effective margins never change, so the algorithm runs a single anchor attaining $\mathrm{AGPPC}^0_a$ throughout and spends the whole budget, giving $\Pi_{\mathrm{greedy}} = (\max_a \mathrm{AGPPC}^0_a)\bar{H} \ge \Pi_F^\ast$. Proposition~\ref{prop:cert} supplies the reverse inequality. 
\end{proof}

The most important aspect of Proposition~\ref{prop:cert} is we do not assume non-saturation, and we do not assume that clusters are disjoint. If the instance happens to satisfy those hypotheses, Theorem~\ref{thm:greedy_opt} applies and the certificate holds. If it does not, the certificate still returns a number and Corollary~\ref{cor:check} lets the algorithm report which of the two cases it is in from its own trace. In this sense Proposition~\ref{prop:cert} makes our framework robust: it gives a tight assessment when the demand state is favorable, and a correct, though conservative, one when it is not.

\section{Solution Methodology}
\label{sec:method}
\subsection{Exact Model and Solvers}
The exact model \eqref{eq:objective}-\eqref{eq:syncmax} is a nonconvex MIQCP. We implement it in Pyomo \citep{hart2011pyomo} and solve it by spatial branch-and-bound, using Gurobi with \texttt{NonConvex=2} \citep{gurobi2024} when a full license is available and SCIP \citep{bolusani2024scip} otherwise, both consuming the identical model through our open-source pipeline. For the 38-grade industrial instance the exact model has roughly 2{,}300 variables, about 1{,}800 of them binary, and 5{,}200 constraints per period.

\subsection{Linearized MILP Baseline}
\label{sec:mcc_method}
The two sources of nonconvexity identified in Section~\ref{sec:model}, namely the rate-hour product \eqref{eq:bilinear} and the rate-tracking constraints \eqref{eq:syncmin}-\eqref{eq:syncmax}, can both be replaced by linear inequalities built from the natural bounds of the participating variables. The result is a MILP that a practitioner without advanced solver can deploy.

\begin{proposition}[McCormick linearization: bounds and error]
\label{prop:mcc}
Let $\mathrm{P_{MC}}$ be the MILP obtained from the exact model by (i) replacing each bilinear constraint \eqref{eq:bilinear} with its McCormick envelope over $r_{ikt} \in [\underline{r}_{ik} y_{ikt},\, \bar{r}_{ik} y_{ikt}]$ and $h_{ikt} \in [0, \bar{H}_k y_{ikt}]$,
\begin{align}
x_{ikt} &\ge \underline{r}_{ik} h_{ikt}, &
x_{ikt} &\ge \bar{r}_{ik} h_{ikt} + \bar{H}_k r_{ikt} - \bar{r}_{ik} \bar{H}_k y_{ikt}, \nonumber\\
x_{ikt} &\le \bar{r}_{ik} h_{ikt}, &
x_{ikt} &\le \underline{r}_{ik} h_{ikt} + \bar{H}_k r_{ikt} - \underline{r}_{ik} \bar{H}_k y_{ikt} + M(1 - y_{ikt}),
\label{eq:mccormick}
\end{align}
and (ii) replacing the bilinear synchronization constraints \eqref{eq:syncmin}-\eqref{eq:syncmax} with indicator (big-M on $y_{i1t}$) counterparts. Then (a) some optimal solution of the exact model is feasible for $\mathrm{P_{MC}}$ with equal objective, so $\Pi_{\mathrm{MC}} \geq \Pi^\ast_{\mathrm{exact}}$; (b) the pointwise envelope error is at most $(\bar{r}_{ik} - \underline{r}_{ik})\, \bar{H}_k / 4$ per term; and (c) fixing $\hat{y}$ from a $\mathrm{P_{MC}}$ solution and re-optimizing the continuous exact model yields a feasible plan with $\Pi_{\mathrm{repair}} \leq \Pi^\ast_{\mathrm{exact}} \leq \Pi_{\mathrm{MC}}$, i.e., a computable optimality sandwich.
\end{proposition}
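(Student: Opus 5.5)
The plan has three parts. For (a), embed an exact optimum into $\mathrm{P_{MC}}$. For (b), use the classical McCormick gap bound. For (c), sandwich the optimum using feasibility of the repaired plan together with (a).

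\emph{Part (a).} Take any optimal solution $(x^\ast,r^\ast,h^\ast,y^\ast,\dots)$ of the exact model and check each replaced constraint. For a pair with $y^\ast_{ikt}=0$, constraints \eqref{eq:hourlink}, \eqref{eq:xlink} and \eqref{eq:ratebnd} give $r=h=x=0$. All four inequalities in \eqref{eq:mccormick} then hold: the three without $M$ read $0\ge0$ or $0\le0$, and the fourth has slack $M$. For $y^\ast_{ikt}=1$, we have $r\in[\underline r,\bar r]$ and $h\in[0,\bar H_k]$, and the four inequalities are the standard McCormick consequences of $(r-\underline r)h\ge0$, $(\bar r-r)(\bar H_k-h)\ge0$, $(\bar r-r)h\ge0$, and $(r-\underline r)(\bar H_k-h)\ge0$ applied to $x=rh$. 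So they hold at the exact point.

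The synchronization constraints are handled the same way. If $h_{i1t}>0$, divide \eqref{eq:syncmin}--\eqref{eq:syncmax} by $h_{i1t}$ to get the linear rate-sum conditions, which the indicator counterparts enforce whenever $y_{i1t}=1$. If $h_{i1t}=0$, the bilinear constraints are vacuous, and I must check that the big-M relaxation is also vacuous. This is the delicate case, since $y_{i1t}=1$ with $h_{i1t}=0$ could occur. I would handle it either by noting that the indicator form is imposed as $h$-gated big-M with $M\ge\rho_t+\sum\bar r$, or by modifying the optimum to set $y_{i1t}=0$ without loss. The latter is possible because zero hours give $x=0$ and do not change profit, although the forcing constraints \eqref{eq:force} and \eqref{eq:minlot} need checking. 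I expect this step to be the main obstacle: choosing the exact indicator formulation so that every exact optimum, including degenerate ones, embeds. I would state the indicator constraints explicitly with an $M$ large enough that the $y_{i1t}=0$ branch is slack.

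All other constraints are shared verbatim between the two models, and the objective is identical. This gives a feasible point of $\mathrm{P_{MC}}$ with the same value, so $\Pi_{\mathrm{MC}}\ge\Pi^\ast_{\mathrm{exact}}$.

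\emph{Part (b).} On the active box $[\underline r,\bar r]\times[0,\bar H_k]$, the gap between $rh$ and either envelope is bounded by the smaller of the two nonnegative products above. For the underestimator, the gap is $\min\{(r-\underline r)h,\,(\bar r-r)(\bar H_k-h)\}$. This minimum is maximized at the box center, giving $(\bar r-\underline r)\bar H_k/4$, and the overestimator is symmetric. I would cite the standard result of \citet{alkhayyal1983jointly} rather than rederive it.

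\emph{Part (c).} Fix $\hat y$. The remaining continuous problem is the exact model restricted to $y=\hat y$. If it is feasible, its optimum is a feasible plan of the exact model, so $\Pi_{\mathrm{repair}}\le\Pi^\ast_{\mathrm{exact}}$. Combining with (a) gives the sandwich. I would add the caveat that feasibility of the repair is an assumption (or is verified a posteriori), since a $\mathrm{P_{MC}}$ binary pattern need not admit an exact completion.
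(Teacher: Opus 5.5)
Your proposal follows the paper's proof essentially step for step: validity of the four McCormick inequalities at any point with $x=rh$ on the box plus the collapse to zero when $y=0$, the indicator form being implied by the bilinear form whenever $h_{i1t}>0$, the standard center-of-box bound for (b), and the fixed-support restriction for (c), where your feasibility caveat is a fair point the paper leaves implicit. For the degenerate case $y_{i1t}=1$, $h_{i1t}=0$ the paper uses only your second option, normalizing an exact optimum so that $y_{i1t}=1\Rightarrow h_{i1t}>0$ since closing idle lines never lowers the objective; that is the right choice, because $h$-gating would alter the stated ``big-M on $y_{i1t}$'' form. Note, however, that \eqref{eq:force} and \eqref{eq:minlot} only loosen when $y_{i1t}$ drops to zero, and the constraint that can actually block closing an idle anchor is \eqref{eq:sync}, which neither you nor the paper checks.
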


\begin{proof}
(a) The four inequalities \eqref{eq:mccormick} are the convex and concave envelopes of $x = rh$ on the box $[\underline{r}, \bar{r}] \times [0, \bar{H}]$ \citep{mccormick1976computability, alkhayyal1983jointly}; every point with $x = rh$ satisfies them, and when $y = 0$ all variables collapse to zero in both models. The indicator form of \eqref{eq:syncmin}-\eqref{eq:syncmax} is implied by the bilinear form whenever $h_{i1t} > 0$, and an exact-optimal solution can always be normalized so that $y_{i1t}=1 \Rightarrow h_{i1t} > 0$, since closing idle lines never reduces the objective; on that support the two forms coincide. (b) The maximal deviation between $rh$ and its envelopes over a box is $(\bar{r}-\underline{r})(\bar{H}-0)/4$, attained at the box center \citep{alkhayyal1983jointly}. (c) With $\hat{y}$ fixed, the continuous model is the exact feasible set to that support, so its optimum is feasible for the exact model and hence a lower bound. 
\end{proof}


\subsection{Operationalizing the Benchmark Strategies}
\label{sec:strategies}
We compare three strategies: make-to-order (MTO), make-to-stock (MTS), and hybrid. All three strategies share the full constraint set and differ only in policy overlays, so that observed differences isolate the strategy itself. Under \textbf{MTO}, which is current practice, production is triggered by current-period orders, i.e., discretionary pellet production may not exceed current demand up to the technical minimum lot, $x_{i1t} + x_{i2t} \leq \max\{d_{it},\, \underline{q}_i\}$, with P3 output exempt because coupling forces it (Remark~\ref{rem:forced}). Under \textbf{MTS}, sales are served entirely from on-hand stock, $z^{out}_{it} \leq \sigma^{s}_{it}$, so that every ton sold was produced in an earlier period. The proposed \textbf{hybrid} imposes no overlay, letting the optimizer choose per grade and period where between the two poles to operate.

\section{Numerical Experiments and Results}
\label{sec:experiments}
In this section we present six experiments on the industrial instance: the metric comparison, the strategy comparison, the linearized baseline, the heuristics against the optimizer, a direct test of the guarantees, and robustness and scalability, closing with the tool deployed at the partner facility. These experiments are chosen to test each claim of the framework in turn.  The first isolates the ranking produced by the metric from any downstream planning decision, while the second and third measure what the strategy space and the linearization are worth once a solver is in the loop. The fourth attributes profit to the metric alone by holding every other rule fixed. The fifth evaluates the certificate of Proposition~\ref{prop:cert} on the plant as it stands and then, on controlled instances, separates the two hypotheses of Theorem~\ref{thm:greedy_opt} to identify which of them the residual loss should be charged to. The last two delimit the range over which the results travel. All experiments were run on an eight-core workstation with models built in Pyomo 6.x. Unless stated otherwise, MILPs are solved with HiGHS to a 1\% gap, exact MIQCPs with Gurobi 11 (\texttt{NonConvex=2}) to a 1\% gap or a 900\,s limit, with SCIP 9 reproducing all runs without a commercial license through the released scripts, and heuristics run in pure Python and SciPy.

\subsection{Industrial Case Study}
\label{sec:casestudy}
The case study uses operational data from a petrochemical-derived polymer facility in Indonesia: 38 product grades (pellet grades on P1 and P2, granule grades, and treated variants), 45 materials (30 raw materials including the bulk propylene-derived feed, and 15 utilities), and 3 coupled processes. Monthly demand forecasts, contract prices, bills of material, rate bounds, compatibility matrices, transition minimums, and the supply contract ($S_t \approx 20{,}675$-$22{,}675$ tons/month at $\rho_t \approx 30.5$ tph) are taken from the facility's 2025 planning workbook. The main horizon is January to March 2025 ($T=3$), and scalability experiments extend to $T=11$. Total demand over the main horizon is about 66{,}400 tons against a conversion capability of roughly the same magnitude, so the plant is capacity-tight and prioritization genuinely matters.

Product and process names are anonymized throughout (grades G01-G38, processes P1-P3). Figure~\ref{fig:compat_structure} shows the compatibility structure that drives the coupling: each of the 20 anchor grades producible on P1 admits only a small set of co-products on P2 (65 of $20 \times 27$ possible pairs) and on P3 (24 of $20 \times 10$), so committing P1 to an anchor pins the plant to a narrow cluster of simultaneous products.

\begin{figure}[htbp]
    \centering
    \includegraphics[width=0.8\linewidth]{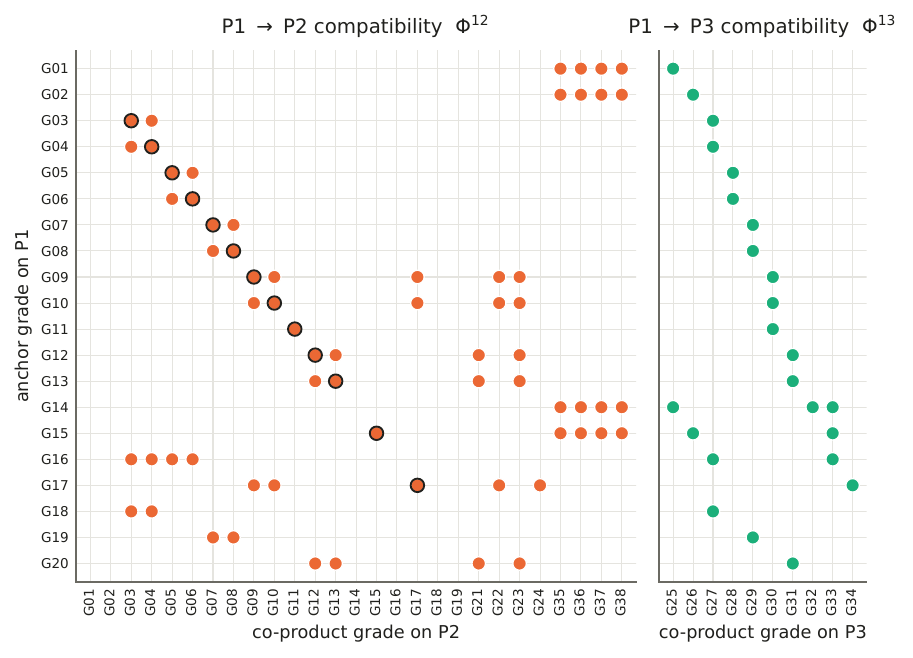}
    \caption{Interprocess compatibility structure of the industrial instance. Left: P1 anchor grades (rows) versus admissible co-products on P2 ($\Phi^{12}$); ringed dots mark self-compatible anchors, where the same grade runs on both pelletizing lines. Right: admissible co-products on the granulation line P3 ($\Phi^{13}$).}
    \label{fig:compat_structure}
\end{figure}

\subsection{Experiment 1: The Metric versus Single-Product Margins}
\label{sec:exp_metric}
\input{tables/E1_metric}
Table~\ref{tab:metric} ranks all 20 anchor grades under both criteria for January 2025. $\Delta$ means SPM rank minus AGPPC rank. The two rankings correlate only moderately (Spearman $\rho = 0.71$, $p < 0.001$; Kendall $\tau = 0.54$). The grade ranked ninth by margin (G15) is the \emph{top} cluster by AGPPC, because its cluster co-produces the same grade on P2 while running a high-value granule, whereas the two grades the margin criterion would prioritize first (G14, G17) anchor clusters worth 43\% to 46\% less per coupled hour. Grades G20 and G10 similarly jump from ranks 11 and 13 to 7 and 8. A planner filling the calendar by the left ranking therefore commits the plant's scarcest resource in nearly the reverse order for several most consequential choices.

\subsection{Experiment 2: Strategy Comparison and the Linearized Baseline}
\label{sec:exp_strategy}
\input{tables/E2_strategy}
Table~\ref{tab:strategy} compares the three strategies of Section~\ref{sec:strategies} on the January to March horizon, each solved with the exact MIQCP, the McCormick MILP bound, and the MILP-plus-repair feasible plan. Here, we report objective (M\$), usage-based operating profit $\Pi$ (M\$), service level (\%), utilization (\%), solver gap (\%), and wall time (s). McCormick rows are relaxation bounds (Proposition~\ref{prop:mcc}) and are not physically realizable plans, which is why they report higher output at lower utilization. Repair rows are feasible plans on the MILP's binary support. \emph{Service level} is served tons as a fraction of total demanded tons; \emph{utilization} is operated hours as a fraction of the hour budget of all lines. Objective values follow~\eqref{eq:objective} and $\Pi$ follows~\eqref{eq:profit}, i.e., the two differ in level only through the treatment of the take-or-pay purchase, and rank the strategies identically.

We note that the hybrid strategy dominates. MTO caps discretionary production at current orders, so when the take-or-pay obligation exceeds order-driven pellet demand the model cannot pre-build next month's high-value sales and coupled hours are pushed into less valuable clusters; operating profit falls by nearly half relative to the hybrid (\$4.30M versus \$8.31M over the quarter) at a lower service level (93.7\% versus 100\%). MTS is far worse. With sales served exclusively from previously produced stock, first-period sales are limited to opening inventories while the feed obligation still forces full-scale production, so the strategy front-loads cost and back-loads or forfeits revenue, and its operating profit over the truncated horizon is negative. However, this is because first-period sales are capped by opening stock and the horizon carries no salvage value on terminal inventory, part of the loss is a cold-start artifact of a three-month window rather than a property of make-to-stock operation as such. We retain it because it brackets the strategy space from one side while the strict order-driven variants of Section~\ref{sec:strategies} bracket it from the other; the managerially relevant comparison is hybrid versus MTO. The hybrid uses build-ahead selectively, mostly for grades whose clusters are cheap to run when the obligation exceeds current demand, which is exactly the flexibility that the coupled structure rewards.

\subsection{Experiment 3: Heuristics versus Optimization}
\label{sec:exp_heur}
\input{tables/E3_heuristics}
Table~\ref{tab:heuristics} evaluates the two greedy planners of Section~\ref{sec:agppc} against the exact optimization on the three-month horizon, all scored by the same usage-based operating profit. Both heuristics honor the take-or-pay conversion floor, in that after the demand-driven phase the remaining obligated feed is converted by the least-costly available cluster. AGPPC-greedy reaches \$6.99M, within 16\% of the optimizer's \$8.31M, at a 97.6\% service level, and improves on the margin-based planner by 7.6\% in profit and two service points. The comparison is designed to isolate the contribution of the metric: the two planners share every rule except how anchors are ranked and how the rate mix inside a column is chosen, so the 7.6\% is attributable to the metric alone.

\subsection{Experiment 4: Testing the Guarantees}
\label{sec:exp_theory}
\input{tables/E5_theory}
Table~\ref{tab:theory} evaluates the certificate of Proposition~\ref{prop:cert} in the idealized single-period setting of Assumption~\ref{ass:fluid}(i)-(iv). The fluid relaxation is built from the plant's own compatibility matrices, giving 82 co-production columns with granule co-products shared across anchors, so no disjointification is performed anywhere in this experiment and the ratio is a valid certificate on the instance as it stands. AGPPC-greedy attains 95.8\%, 95.6\% and 95.8\% of $\Pi_F^\ast$ in January, February and March, so that cluster overlap and demand saturation together cost the heuristic at most 4.4\% of the fluid optimum on this plant. The margin-based planner is not merely worse but value-destroying in this setting, at $-812\%$ to $-1113\%$ of the fluid optimum, because with a single period and no inventory credit every forced co-product whose demand it has exhausted is produced at full cost. 

\input{tables/E5b_validation}

We also explore the natural question which of the hypotheses contributes the residual 4.4\% should be charged to. Table~\ref{tab:theory_validation} shows 60 random 12-anchor problems per cell, with the plant's rate envelopes, crossing Assumption~\ref{ass:fluid}(v) against cluster structure. Where (v) holds the greedy is exactly optimal in every single instance, whether clusters are pairwise disjoint or every anchor draws its co-products from one common pool, which is the content of Theorem~\ref{thm:greedy_opt} and confirms that overlap alone is harmless. Where (v) fails the greedy loses even with disjoint clusters (0.52\% on average, 3.82\% at worst), and overlap then compounds the loss (2.17\% on average, 33.59\% at worst).

\subsection{Experiment 5: Robustness and Scalability}
\label{sec:exp_robust}
\input{tables/E4_scalability}
Finally, we report how far our results extend beyond the data from our case study. Scaling all demands by $\pm 10\%$ and $\pm 20\%$ and perturbing the feed rate $\rho_t$ by $\pm 5\%$ and $\pm 10\%$ gives nine scenarios including the baseline. AGPPC-greedy leads the margin-based planner in eight of them, by \$0.08M to \$3.06M of quarterly operating profit against a baseline lead of \$0.49M. In the ninth, a 10\% reduction in feed rate, the two are within 1.2\% of each other and the margin-based planner is marginally ahead, because a slower feed leaves enough coupled hours that anchor choice stops being scarce. The advantage is therefore state-dependent. It is largest when capacity is tight relative to demand and shrinks toward zero when it is not. The certificate of Proposition~\ref{prop:cert} is far more stable than the advantage it bounds, staying between 94.7\% and 96.0\% across all nine scenarios.

The penalty weight $\lambda$ deserves the same sensitivity analysis. We begin at 1 \$/ton to 100 \$/ton, sweeping  it over $\{0, 0.1, 1, 5, 10, 50, 100\}$ on the linearized model leaves the objective, the operating profit, the service level, and the entire optimal binary support unchanged for all three strategies. On the exact MIQCP, $\lambda = 0$ and $\lambda = 1$ give operating profits of \$8{,}313{,}554 and \$8{,}313{,}420 on the same support of 330 open grade-line-period pairs.

\label{sec:exp_scal}
Furthermore, Table~\ref{tab:scalability} reports model sizes and solve times as the horizon grows from 1 to 11 months. The McCormick MILP scales gracefully, staying under two seconds even at $T=11$ with 25{,}500 variables. The exact MIQCP solves to a 1\% gap within seconds for the horizons of practical interest for monthly S\&OP ($T \le 6$), but at $T=11$ it exhausts the 900\,s budget with a 9.2\% residual gap, which is exactly the regime where the MILP bound and the repair step earn their keep, while the greedy heuristic is horizon-insensitive. This supports a tiered deployment: AGPPC-greedy for interactive what-if planning, the MILP-plus-repair pipeline for long-horizon or frequent re-planning with certified gaps, and the exact model for the monthly S\&OP cycle.

\subsection{The Deployed Planning Tool}
\label{sec:exp_tool}
The model is deployed at the partner facility through a web application serving the Pyomo model core used in the experiments. To use it, planners upload the monthly workbook, select the demand basis, and inspect the optimized plan through interactive visualizations. Figures~\ref{fig:app_fulfillment} and~\ref{fig:app_heatmap} reproduce the tool's own outputs on the anonymized January instance under its two demand bases: order-driven planning, where demand equals confirmed sales orders and which corresponds to the MTO strategy, and forecast-driven planning, where demand equals the sales forecast and which corresponds to the MTS strategy.

\begin{figure}[htbp]
    \centering
    \includegraphics[width=0.78\linewidth]{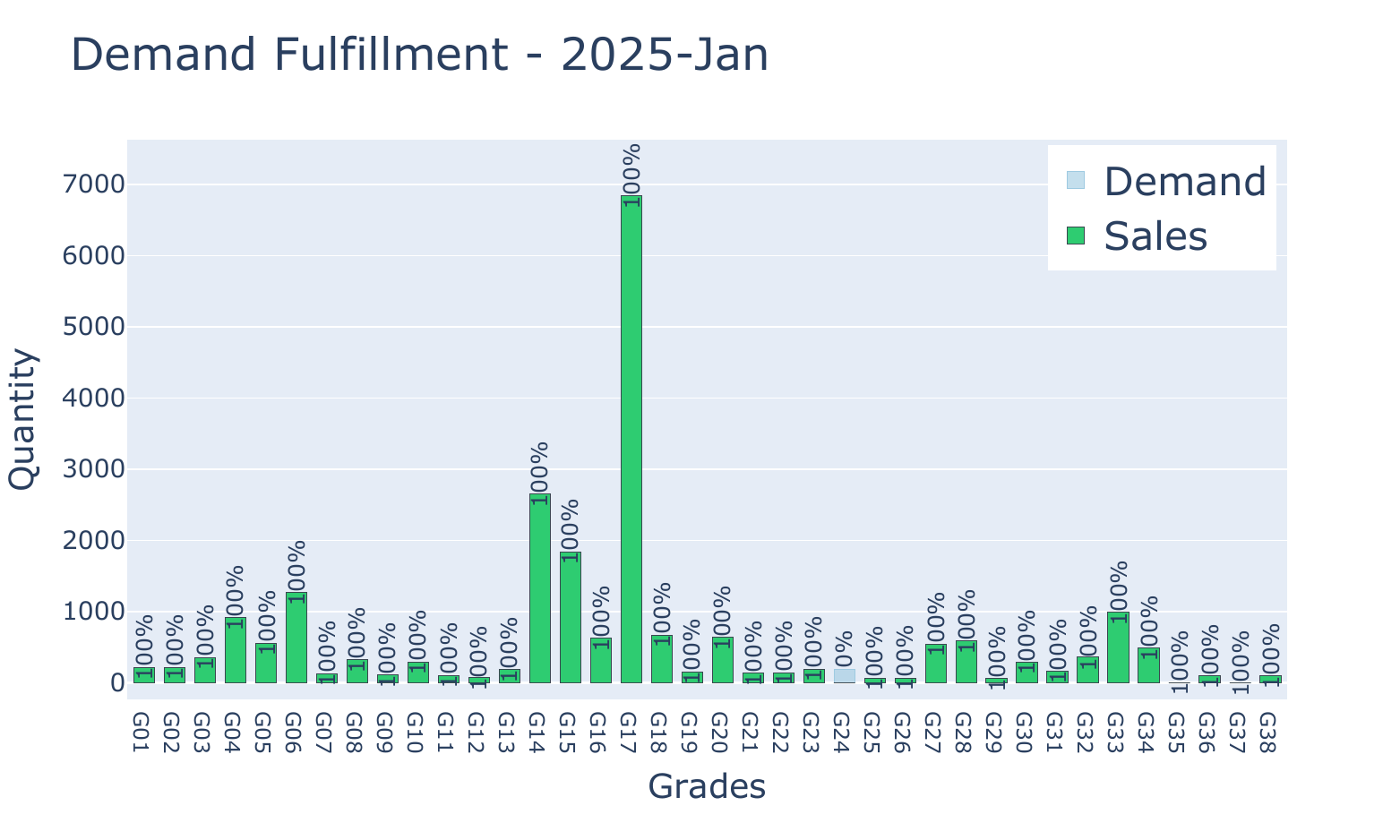}\\[1pt]
    \includegraphics[width=0.78\linewidth]{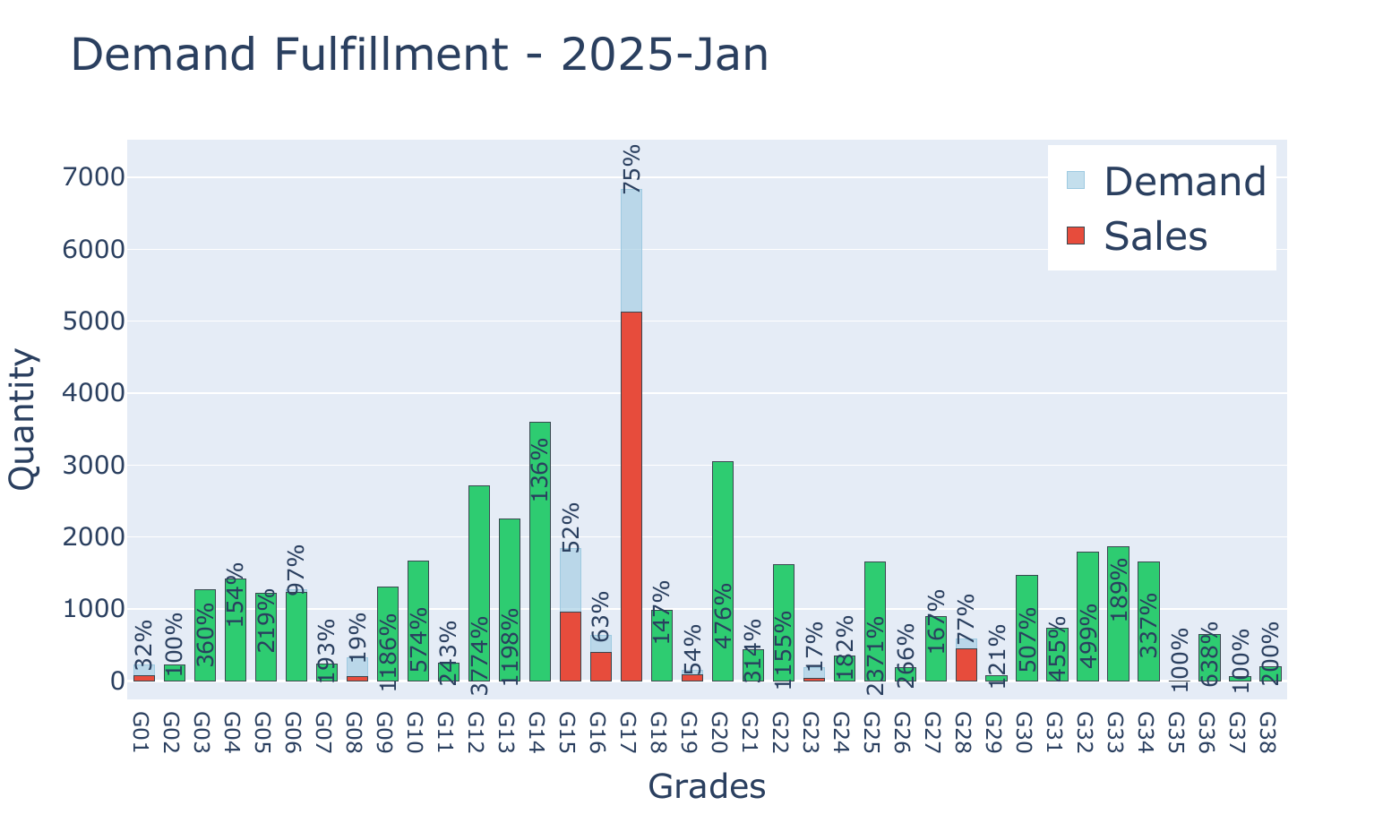}
    \caption{Demand-fulfillment view of the deployed tool on the anonymized January instance. Top: order-driven basis (MTO strategy), where every confirmed order is served in full. Bottom: forecast-driven basis (MTS strategy), where grades receiving the forced surplus conversion sell above forecast while others are left short (red bars).}
    \label{fig:app_fulfillment}
\end{figure}

Furthermore, the production-target heatmap of Figure~\ref{fig:app_heatmap} shows the plant-level operations planning. Grade-line assignments concentrate on a small set of anchor clusters, and the granulation line's assignment follows the anchor selected on the primary pelletizing line through the compatibility sets. The tool also shows the plan's material logistics, through a flow view tracing procurement into material pools and on into grade tonnage, and through a per-grade balance of stock, production, and sales across the quarter, combining all S\&OP decisions, shown in Figure~\ref{fig:app_sankey}.

\begin{figure}[htbp]
    \centering
    \includegraphics[width=0.72\linewidth]{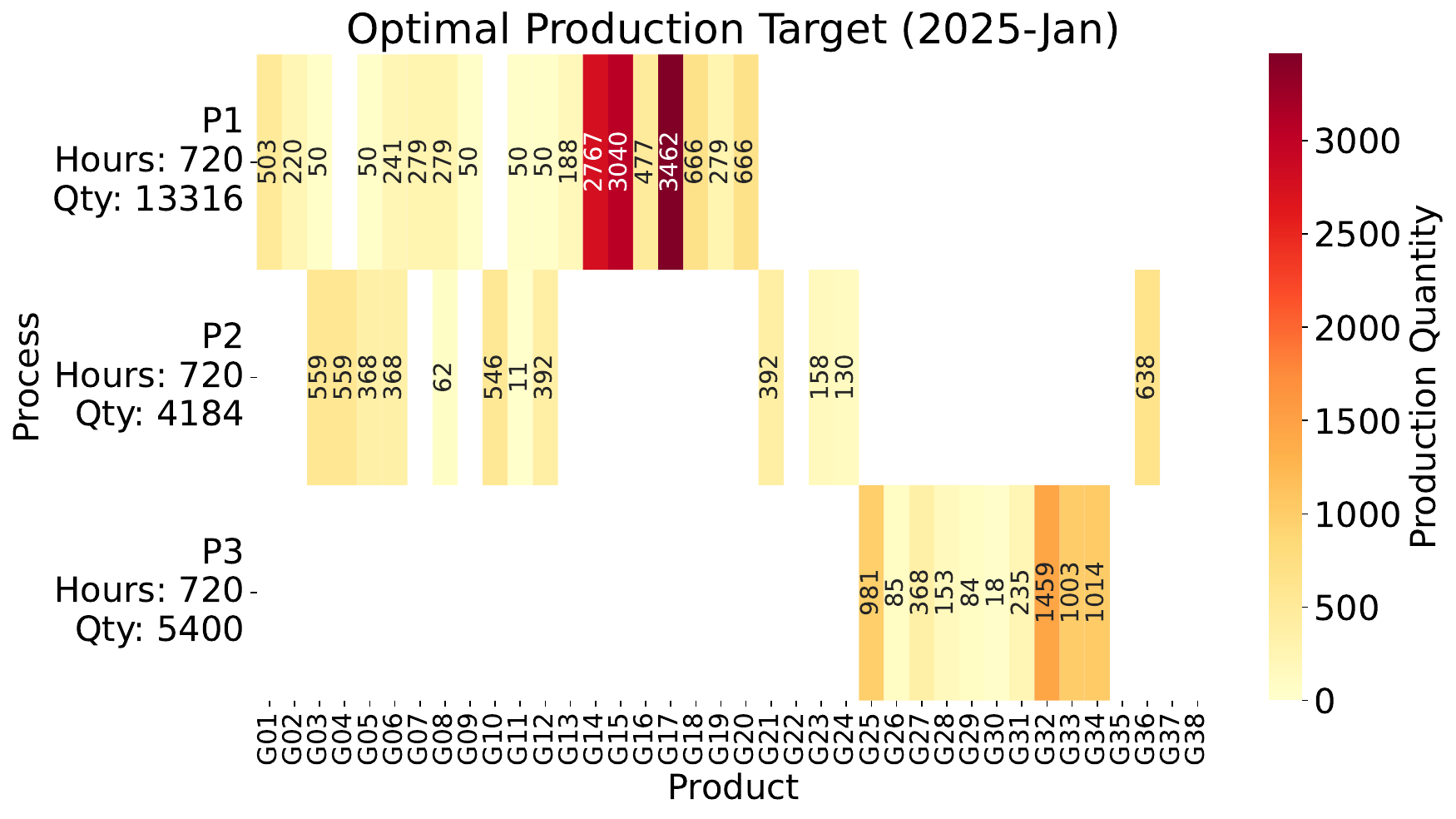}\\[1pt]
    \includegraphics[width=0.72\linewidth]{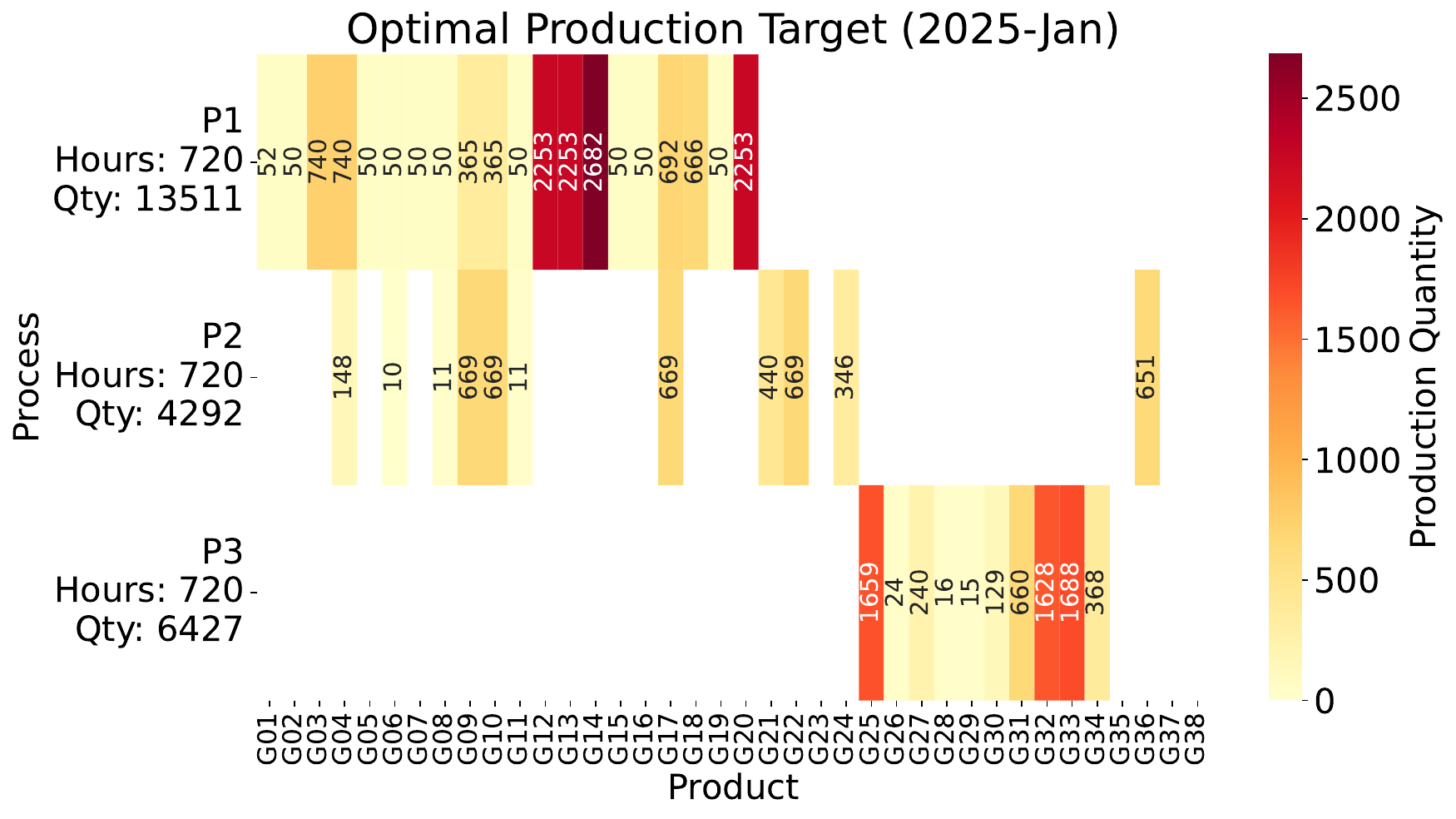}
    \caption{Production-target heatmaps from the deployed tool for January (top: order-driven; bottom: forecast-driven)}
    \label{fig:app_heatmap}
\end{figure}

\begin{figure}[htbp]
    \centering
    \includegraphics[width=0.9\linewidth]{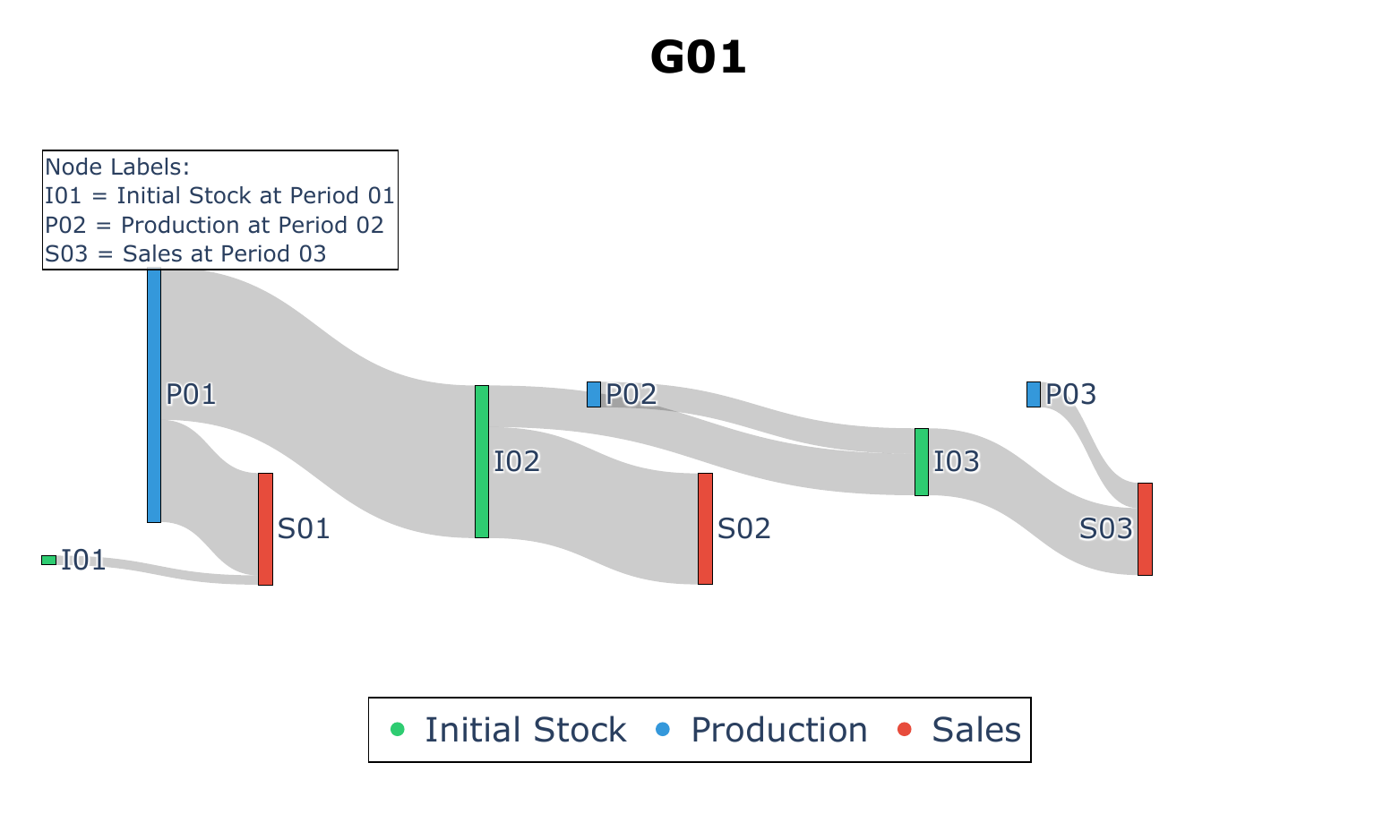}
     \caption{Product grades logistics flow showing inventory, production, and sales for Grade G01.}
    \label{fig:app_sankey}
\end{figure}

\section{Discussion and Future Work}
\label{sec:conclusion}

We briefly discuss three main insights from our study. First, \emph{price the bottleneck, not the product}. In coupled continuous plants the scarce resource is an hour of coupled operation, so any per-ton, per-product metric, however carefully costed, would likely yield suboptimal results. A better approach is highlighted in our second insights, which is \emph{co-production discipline dominates portfolio selection}. The advantage of coupling-aware planning comes overwhelmingly from avoiding forced co-production of demand-exhausted grades instead of from picking different headline products. This means planners should track remaining \emph{cluster} demand and not only product demand. Here, the advantage is state-dependent, i.e. it is modest though still material when the take-or-pay obligation leaves little discretion and more decisive whenever discretionary capacity exists. Finally, \emph{hybrid beats pure strategies for structural reasons}. The feed contract forces conversion even when current demand is weak, so the option to build ahead selectively must coexist with demand-triggered production. 

We close by discussing some limitations of our work and offer prospective follow-up works. First, the quantitative results are specific to the instance class we studied, and the perturbation experiments might have delimited them. While the structural findings seem to hold for any plant satisfying Remark~\ref{rem:forced}, the size of the metric's advantage depends on how tight coupled hours are relative to demand. Second, the residual gap of Algorithm~\ref{alg:greedy} is now localized to demand saturation instead of to cluster structure, so an allocation rule that prices the \emph{duration} of a cluster's demand instead of only its instantaneous rate would most likely yield better results. Proving a guarantee for such a rule under saturation is the natural next theoretical step. Third, AGPPC is single-period and myopic in inventory, which is why it cannot price the build-ahead option that the hybrid strategy exploits. Adding a continuation value on carried stock to the effective margin~\eqref{eq:effmargin} would address this, but we defer it to future works. 

\section{Conclusion}

In this paper we proposed a coupling-aware approach to sales and operations planning in process plants whose parallel lines are tied together by a shared bulk feed and grade compatibility. We argue that the prioritization criterion in standard use, the single-product gross profit margin, encounters a potential lack of economic validity in such plants and, moreover, the peril of an undetected and systematic destruction of value. In contrast, our proposed approach prices the object the plant can actually commit to, namely a co-production column and a feasible rate mix, by combining the cluster concept with demand-aware effective margins, and allocates the coupled-hour budget by that price. Our approach is designed to strengthen, rather than to replace, the criterion already in place: it reduces to the familiar margin ranking exactly when the plant is decoupled, and it is computed from the same data planners already maintain. Our guarantees on correctness rely on a fluid relaxation that retains precisely the coupling constraints, and they are exhibited in relation to the demand state, with more conservativeness when demands saturate within the coupled-hour budget. We view this work as a first step toward a rigorous economics of coupled production planning, and we envision it to lay the foundation for further improvements in prioritizing complex production systems.

\section*{Data and Code Availability}
\noindent The model implementation, the anonymized dataset, and experiment scripts are available in the companion open-source repository \url{https://github.com/ai-vnv/CoupledPlantOPT}.

\section*{Declaration of Competing Interest}
\noindent The authors declare that they have no known competing financial interests or
personal relationships that could have appeared to influence the work reported
in this paper.

\section*{Declaration of Generative AI Usage}
\noindent During the preparation of this work the authors used Claude Chat (Opus 5 and Fable) in order to
refine words, provide critical assessments, and find overlooked references. After
using this tool, the authors reviewed and edited the content as needed and take
full responsibility for the content of the published article. The authors also used Copilot to polish and document the companion code for better legibility and reproducibility of the results. 

\bibliographystyle{elsarticle-num-names}
\bibliography{reference}

\end{document}

%% file: tables/E1_metric.tex
\begin{table}[htbp]
\centering
\caption{Anchor grades ranked by the traditional SPM versus the proposed coupling-aware metric AGPPC, January 2025. }
\label{tab:metric}
\begin{tabular}{@{}crcrcc@{}}
\toprule
\textbf{Anchor grade} & \textbf{SPM (\$/ton)} & \textbf{SPM rank} & \textbf{AGPPC (\$/h)} & \textbf{AGPPC rank} & $\Delta$ \\
\midrule
G15 & 115.7 & 9 & 6,473 & 1 & +8 \\
G17 & 138.2 & 2 & 3,659 & 2 & +0 \\
G14 & 138.5 & 1 & 3,507 & 3 & -2 \\
G09 & 118.3 & 5 & 3,290 & 4 & +1 \\
G16 & 121.6 & 3 & 3,205 & 5 & -2 \\
G11 & 120.6 & 4 & 3,186 & 6 & -2 \\
G20 & 113.3 & 11 & 3,173 & 7 & +4 \\
G10 & 110.3 & 13 & 3,169 & 8 & +5 \\
G12 & 105.1 & 16 & 3,041 & 9 & +7 \\
G07 & 116.7 & 8 & 2,991 & 10 & -2 \\
G13 & 101.2 & 18 & 2,983 & 11 & +7 \\
G08 & 114.4 & 10 & 2,953 & 12 & -2 \\
G19 & 108.1 & 14 & 2,846 & 13 & +1 \\
G06 & 117.4 & 6 & 2,781 & 14 & -8 \\
G18 & 117.0 & 7 & 2,737 & 15 & -8 \\
G02 & 106.9 & 15 & 2,732 & 16 & -1 \\
G05 & 112.2 & 12 & 2,694 & 17 & -5 \\
G01 & 85.7 & 20 & 2,622 & 18 & +2 \\
G03 & 101.8 & 17 & 2,464 & 19 & -2 \\
G04 & 97.6 & 19 & 2,392 & 20 & -1 \\
\bottomrule
\end{tabular}
\end{table}

%% file: tables/E2_strategy.tex
\begin{table}[htbp]
\centering
\caption{Strategy comparison on the industrial instance (Jan--Mar 2025, $T=3$).}
\label{tab:strategy}
\begin{tabular}{@{}llrrrrrr@{}}
\toprule
\textbf{Strategy} & \textbf{Model} & \textbf{Obj.} & $\Pi$ & \textbf{Serv.} & \textbf{Util.} & \textbf{Gap} & \textbf{Time} \\
\midrule
Hybrid (proposed) & Exact MIQCP & 5.45 & 8.31 & 100.0 & 88.0 & 1.0 & 0.6 \\
Hybrid (proposed) & McCormick MILP (bound) & 5.56 & 8.42 & 100.0 & 80.3 & 0.1 & 0.4 \\
Hybrid (proposed) & MILP + repair (feasible) & 5.45 & 8.31 & 100.0 & 88.0 & 0.9 & 0.7 \\
\addlinespace
MTO & Exact MIQCP & 1.43 & 4.30 & 93.7 & 89.9 & 1.0 & 3.1 \\
MTO & McCormick MILP (bound) & 1.61 & 4.48 & 93.7 & 75.6 & 0.0 & 0.4 \\
MTO & MILP + repair (feasible) & 1.43 & 4.30 & 93.7 & 89.6 & 1.0 & 3.1 \\
\addlinespace
MTS & Exact MIQCP & -4.79 & -1.91 & 83.6 & 88.8 & 1.0 & 158 \\
MTS & McCormick MILP (bound) & -3.94 & -1.06 & 84.8 & 78.4 & 0.0 & 0.3 \\
MTS & MILP + repair (feasible) & -4.79 & -1.91 & 83.6 & 88.7 & 1.5 & 900 \\
\addlinespace
\bottomrule
\end{tabular}
\end{table}

%% file: tables/E3_heuristics.tex
\begin{table}[htbp]
\centering
\caption{Heuristics versus optimization on the industrial instance (Jan--Mar 2025).}
\label{tab:heuristics}
\begin{tabular}{@{}lrrrr@{}}
\toprule
\textbf{Planner} & $\Pi$ (M\$) & \textbf{Service (\%)} & \textbf{Production (t)} & \textbf{Time (s)} \\
\midrule
Exact hybrid optimization & 8.31 & 100.0 & 62,019 & 0.6 \\
AGPPC-greedy (proposed) & 6.99 & 97.6 & 62,509 & 1.1 \\
SPM-greedy (margin practice) & 6.50 & 95.9 & 61,778 & 0.8 \\
\bottomrule
\end{tabular}
\end{table}

%% file: tables/E5_theory.tex
\begin{table}[htbp]
\centering
\caption{Per-instance certificate (Proposition~\ref{prop:cert}) in the idealized single-period setting.}
\label{tab:theory}
\begin{tabular}{@{}lrrrrr@{}}
\toprule
 & \multicolumn{1}{c}{\textbf{Fluid LP} $\Pi_F^\ast$} & \multicolumn{2}{c}{\textbf{AGPPC-greedy}} & \multicolumn{2}{c}{\textbf{SPM-greedy}} \\
\cmidrule(lr){2-2}\cmidrule(lr){3-4}\cmidrule(lr){5-6}
\textbf{Month} & (M\$) & (M\$) & \% of $\Pi_F^\ast$ & (M\$) & \% of $\Pi_F^\ast$ \\
\midrule
January  & 1.765 & 1.691 & 95.8 & $-15.390$ & $-871.7$ \\
February & 1.764 & 1.685 & 95.6 & $-14.323$ & $-812.1$ \\
March    & 1.440 & 1.379 & 95.8 & $-16.021$ & $-1112.9$ \\
\bottomrule
\end{tabular}
\end{table}

%% file: tables/E5b_validation.tex
\begin{table}[htbp]
\centering
\caption{Gap of Algorithm~\ref{alg:greedy} to the fluid optimum}
\label{tab:theory_validation}
\begin{tabular}{@{}llrrrr@{}}
\toprule
\textbf{Assumption~\ref{ass:fluid}(v)} & \textbf{Clusters} & \textbf{Mean} & \textbf{Median} & \textbf{90th pct.} & \textbf{Max} \\
\midrule
Holds & disjoint & 0.000 & 0.000 & 0.000 & 0.000 \\
Holds & overlapping & 0.000 & 0.000 & 0.000 & 0.000 \\
\addlinespace
Violated & disjoint & 0.52 & 0.14 & 1.49 & 3.82 \\
Violated & overlapping & 2.17 & 0.00 & 3.29 & 33.59 \\
\bottomrule
\end{tabular}
\end{table}

%% file: tables/E4_scalability.tex
\begin{table}[htbp]
\centering
\caption{Scalability with the planning horizon $T$ (hybrid strategy).}
\label{tab:scalability}
\begin{tabular}{@{}rrrrrrr@{}}
\toprule
$T$ & \textbf{Variables} & \textbf{MILP time} & \textbf{MILP obj.} & \textbf{Exact time} & \textbf{Exact obj.} & \textbf{Exact gap} \\
\midrule
1 & 2,322 & 0.1 & 2.86 & 0.2 & 2.81 & 0.8 \\
2 & 4,644 & 0.3 & 7.16 & 0.4 & 7.10 & 0.2 \\
3 & 6,966 & 0.4 & 5.56 & 0.9 & 5.45 & 1.0 \\
6 & 13,932 & 0.8 & 9.34 & 2.0 & 9.15 & 0.6 \\
11 & 25,542 & 1.6 & 28.40 & 903 & 23.40 & 9.2 \\
\bottomrule
\end{tabular}
\end{table}